\begin{document}

\title{Relations among $\IP$-twists}
\author[A. Hochenegger]{Andreas Hochenegger}
\address{ 
Dipartimento di Matematica ``Francesco Brioschi'', 
Politecnico di Milano, 
via Bonardi 9, 
20133 Milano  
} 
\email{andreas.hochenegger@polimi.it} 
\author[A. Krug]{Andreas Krug}
\address{
Institut f\"ur algebraische Geometrie,
Gottfried Wilhelm Leibniz Universit\"at Hannover,
Welfengarten 1,
30167 Hannover
}
\email{krug@math.uni-hannover.de}

\maketitle

\begin{abstract}
Given two $\IP$-objects in some algebraic triangulated category,
we investigate the possible relations among the associated $\IP$-twists.
The main result is that, under certain technical assumptions, the $\IP$-twists commute if and only if the $\IP$-objects are orthogonal. Otherwise, there are no relations at all.
In particular, this applies to most of the known pairs of $\IP$-objects on hyperk\"ahler varieties.
In order to show this, we relate $\IP$-twists to spherical twists and apply known results about the absence of relations between pairs of spherical twists.
\end{abstract}

\tableofcontents

\addtocontents{toc}{\protect\setcounter{tocdepth}{1}} 
  
\section{Introduction}

Spherical objects and the associated spherical twists are well-studied, 
in particular, one knows a lot about the relations between spherical twists.
Much is already present in \cite{ST} by P. Seidel and R. Thomas, where spherical objects were introduced in algebraic geometry.
Important geometric examples of such objects are line bundles in the derived category of a K3 surface, or, more generally, of a strict Calabi--Yau variety of arbitrary dimension.

Spherical objects and their twists generalise quite naturally to $\IP$-objects and their $\IP$-twists, see \cite{HT06} by D. Huybrechts and R. Thomas.
Here, the easiest examples are line bundles in the derived category of a hyperk\"ahler variety.
Most of the story about spherical objects carries over to $\IP$-objects, somewhat in analogy to the passage from K3 surfaces to higher dimensional hyperk\"ahler varieties.

We want to highlight a connection between these two notions, which is central for this article.
Given a hyperk\"ahler variety $X$, there is the inclusion $j \colon X \to \cX$ into its twistor space. The twistor space $\cX$ is the total space of all possible complex structures that can be put on $X$, which is an analytic space over $\IP^1$.

By \cite{HT06}, the (derived) pushforward $j_* \colon \cD(X) \to \cD(\cX)$ turns any $\IP$-object $P \in \cD(X)$ into a spherical object in $\cD(\cX)$, under an assumption, which can be interpreted as $P$ does not deform from $X$ to $\cX$.
Moreover, the associated twists fit together in the sense that the following diagram commutes
\[
\begin{tikzcd}
\cD(X) \ar[r, "j_*"] \ar[d, "\PPP_P"'] & \cD(\cX) \ar[d, "\TTT_{j_* P}"] \\
\cD(X) \ar[r, "j_*"] & \cD(\cX) 
\end{tikzcd}
\]
where $\PPP_P$ is the $\IP$-twist and $\TTT_{j_* P}$ is the spherical twist associated to $j_*P$.

This functor $j_* \colon \cD(X) \to \cD(\cX)$ is the prototype of what we call a \emph{spherification functor}, a functor that turns $\IP$-objects into spherical objects; see \autoref{sec:spherification:definition} for details on our definition. 
We establish the existence of such functors that turn $\IP$-objects into spherical objects also more abstractly, using our previous work on formality of $\IP$-objects \cite{Formality}:

\begin{theoremalpha}[\autoref{cor:formal:spherification}]
\label{main:spherification}
Let $P_1,\ldots,P_m$ be $\IP^n[k]$-objects in some algebraic triangulated category,
such that $k\ge 2$ is even, $n\ge 2$, $\gcd(k,nk/2)>1$ and $\Hom^*(P_i,P_j)$ is concentrated in degree $nk/2$ for $i\neq j$ (or zero).
Then there is a spherification functor $\FF \colon  \genby{P_1,\ldots,P_m} \to \cT$.
\end{theoremalpha}

We conjecture that the assumptions of \autoref{main:spherification} are mainly of technical nature, that is, we suppose that abstract spherification functors exist in greater generality.

Anyway, with a spherification functor at hand (of geometric origin or an abstract one), we can use the compatibility of $\IP$-twists and spherical twists. The relations between the spherical twists of two spherical objects are known by work of Y. Volkov \cite{Volkov}, which we pull back to $\IP$-twists along a spherification functor to obtain the following theorem. We say that two objects $P_1$ and $P_2$ of a triangulated category are \emph{isomorphic up to shift} if there exists some $m\in \IZ$ such that $P_1\cong P_2[m]$. Note that, if two $\IP$-objects are isomorphic up to shift, their associated twists are the same; see e.g.\ \cite[Lem.\ 2.4(ii)]{Krug-HK}.  

\begin{theoremalpha}[\autoref{thm:main}]
\label{main:relations}
Let $P_1,P_2$ be two $\IP^n[k]$-objects with $n\ge 2$ and $k\neq0,1$  which are not isomorphic up to shift in some algebraic triangulated category $\cS$ with $\Hom^*(P_1,P_2) \neq0$.
If there is a spherification functor $\FF \colon \genby{P_1,P_2} \to \cT$,
then the associated $\IP$-twists $\PP_1$ and $\PP_2$ generate the free group $F_2$.
\end{theoremalpha}

Note that the assumption $n\ge 2$ is logically not neccesary, as for any pair of non-orthogonal $\IP^1[k]$-objects (i.e.\ $k$-spherical objects) the associated $\IP$-twists (which are then the squares of the spherical twists) have no relations -- independently of the existence of a spherification functor. This follows directly from the results of Volkov on the (absence of) relations between pairs of spherical twists; see \autoref{rem:n=1} for details.   

We remark, that if $\Hom^*(P_1,P_2)=0$, then $\genby{\PP_1,\PP_2} \cong \IZ^2$. This was already known in essence, for completeness we discuss this case in detail in \autoref{sec:orthogonal}.

\subsection*{Symplectic geometry}
Finally, we want to mention that spherical objects and $\IP$-objects appear not only in algebraic geometry.
Actually, spherical twists appeared first in symplectic geometry as Dehn twists about Lagrangian spheres. 
 It was the homological mirror symmetry conjecture by M. Kontsevich \cite{Kontsevich} that motivated the successful search for spherical objects in algebraic geometry \cite{ST}.

Similarly, $\IP$-twists are (generalised) Dehn twists about Lagrangian projective spaces $\IP^n$ (this statement seems to be in parts still only conjectural; see \cite{MW18} by C.Y. Mak and W. Wu for details).

In the symplectic set-up, results analogous to \autoref{main:relations} on the absence of relations between Dehn twists along Lagrangian projective spaces can be found in \cite{Torricelli} by B.C. Torricelli. The authors were not aware of \emph{loc.\ cit}.\ before putting the first version of the present paper on the arXiv.
Not only the results, but also the proofs in \cite{Torricelli} are somewhat analogous to ours. We use spherification functors to deduce our results from Volkov's result on the absence of relations between spherical twists, while Torricelli uses the Hopf correspondence to deduce her result from known results about relations among Dehn twists about Lagrangian spheres by A. Keating \cite{Keating}. 

\subsection*{Conventions}

By $\kk$ we denote some arbitrary base field.
All categories in this article will be additive and $\kk$-linear.

Whenever we speak of an \emph{algebraic triangulated category}, we actually mean a (Karoubi complete) \emph{Morita enhanced triangulated category} $\cT$, that is, a category equivalent to the subcategory $\cD^c(\cA)$ of compact objects in the derived category of left dg modules over some small dg category $\cA$; see \cite[\S 4.2]{ALdg} for details on how the notion of Morita enhanced triangulated categories compares to dg enhanced triangulated categories. We could also work with \emph{large Morita enhanced triangulated categories} instead, i.e.\ categories equivalent to the full derived category $\cD(\cA)$ of some dg category. However, our main result is about faithfulness of a categorical group action. If the action of the free group $F_2$ on $\cD^c(\cA)$ via two $\IP$-twists is faithful, the same action on the bigger category $\cD(\cA)$ is faithful as well. 

Whenever we speak of a functor $\FF\colon \cS\cong \cD^c(\cA)\to \cT\cong \cD^c(\cB)$, we always assume this to be induced by a bimodule, i.e.\ it is of the form $\FF\cong M\otimes \blank$, where $M$ is a $\cB$-perfect $\cB\hh\cA$-bimodule, and the tensor product is the derived tensor product (by considering derived categories of left dg modules, and accordingly $\cB\hh\cA$-bimodule instead of $\cA\hh\cB$-bimodules, we have the same order for tensor products as for the composition of functors).
This assumption is needed in the proof of \autoref{prop:commutativity} which will be the only place where we work on the level of the bimodules instead of the functors between triangulated categories.  

Note that bounded derived categories of coherent sheaves on smooth varieties are equivalent to $\cD^c(\cA)$, where $\cA$ can be chosen as a dg algebra, i.e.\ a dg category with one object; see \cite[\S 3.1]{Bon-vdB}. Furthermore, every Fourier--Mukai transform is induced by a bimodule under this identification; see \cite[Ex.\ 4.3 \& \S 5.2]{ALdg}.

We write distinguished triangles as $A \to B \to C$, supressing the degree increasing morphism $C \to A[1]$.

In a triangulated category, we write $\Hom^*(A,B)$ for the graded vector space of derived homomorphisms $\bigoplus_i \Hom(A,B[i])[-i]$. In the setting of dg-modules, we will use $\Hom^\bullet(A,B)$ to denote the complex of homomorphisms.

\subsection*{Acknowledgements}

We thank Ivan Smith for making us aware that, in the first version of this paper, we missed the relevant reference \cite{Torricelli} as well as the assumption that the objects in \autoref{main:relations} must not be isomorphic up to shift.
We also thank an anonymous referee for useful comments and suggestions, and in particular for pointing out a gap in the first version of the proof of \autoref{prop:commutativity}.
\\ The first named author was partially supported by the project PRIN 2022 \emph{Unirationality, Hilbert schemes, and singularities}. 

\section{Enter spherical and $\IP$-objects}

We recall some basic facts from \cite{ST} and \cite{HT06},
using the slightly generalised notations as introduced in \cite{HKP}, \cite{Kunits} and \cite{Formality}.

\begin{definition}
Let $P$ be an object in some ($\kk$-linear) triangulated category, and let $n,k,d$ be integers with $n>0$.

We say that $P$ is \emph{$\IP^n[k]$-like} if $\End^*(P) \cong \kk[t]/t^{n+1}$ with $\deg(t) = k$.
If $P[d]$ is a Serre dual of $P$, that is,
\[
\Hom^*(P,\blank) \cong \Hom^*(\blank,P[d])^\vee
\]
then we say that $P$ is a \emph{$d$-Calabi--Yau} object.
We will drop $d$ as a prefix, if it is clear from the context.

If $P$ is $\IP^n[k]$-like and Calabi--Yau (in which case necessarily $d=nk$), then we say that $P$ is a \emph{$\IP^n[k]$-object}.

If $n=1$, then a $\IP^1[k]$-like object, or $\IP^1[k]$-object, is better known as a \emph{spherelike} or \emph{spherical} object, respectively.

If $k=2$, then a $\IP^n[2]$-object is better known as a \emph{$\IP^n$-object}. We will drop the exponent $n$, if it is clear from the context.
\end{definition}

\begin{proposition}[\cite{ST}]\label{prop:ST}
Let $S$ be a spherical object in some algebraic triangulated category $\cT$.
Then the \emph{spherical twist} $\TTT_S$ defined by the triangle
\[
S \otimes \Hom^*(S,\blank) \xto{\mathrm{ev}} \id \to \TTT_S
\]
is an autoequivalence of $\cT$.
\end{proposition}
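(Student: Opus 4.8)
The plan is to lift the defining triangle to the dg-enhancement, so that $\TTT_S$ becomes a genuine exact functor (the cone of $\mathrm{ev}\colon\Hom^\bullet(S,\blank)\otimes S\to\id$) which, being of Fourier--Mukai type, admits both a left and a right adjoint. I would then prove that $\TTT_S$ is an equivalence by showing separately that it is fully faithful and essentially surjective, testing everything against the spanning class $\{S\}\cup S^{\perp}$, where $S^{\perp}=\{E : \Hom^*(S,E)=0\}$. That this is a spanning class uses the Calabi--Yau property $\Hom^*(S,E)\cong\Hom^*(E,S[k])^\vee$ (note $d=nk=k$ since $n=1$): it makes $S^{\perp}$ orthogonal to $S$ on both sides, so an object killed by all $\Hom^*(a,\blank)$ with $a\in\{S\}\cup S^{\perp}$ must in particular lie in $S^{\perp}$ and hence be zero.

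The next step is to compute the action of $\TTT_S$ on this class. For $E\in S^{\perp}$ the first term of the defining triangle vanishes, so $\TTT_S(E)\cong E$. For $S$ itself I would use $\End^*(S)\cong\kk\oplus\kk[-k]$, giving $\End^*(S)\otimes S\cong S\oplus S[-k]$; the degree-$0$ component of the evaluation is $\id_S$ and cancels, while the degree-$k$ component $S[-k]\to S$ is absorbed, leaving
\[
\TTT_S(S)\cong S[1-k].
\]

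Full faithfulness then follows from the standard criterion that an exact functor with adjoints is fully faithful as soon as it induces isomorphisms on all graded $\Hom$-spaces between members of a spanning class. The mixed and orthogonal terms are immediate, since for $E,E'\in S^{\perp}$ we have $\TTT_S=\id$, while $\Hom^*(S,E)$ and (by Serre duality) $\Hom^*(E,S)$ both vanish and are matched with the shifted, still vanishing, groups after twisting. The one genuine computation is on $\End^*(S)$, where one has abstractly $\Hom^*(\TTT_S S,\TTT_S S)\cong\End^*(S[1-k])\cong\End^*(S)$, but must check that the map \emph{actually induced} by $\TTT_S$ is an isomorphism and does not annihilate the degree-$k$ generator $t$. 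This verification, which forces one to unwind the cone (equivalently, to use the $A_\infty$- or kernel-level structure rather than mere dimension counts), is the technical heart of the argument and the step I expect to be the main obstacle.

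Finally, essential surjectivity is cheap once $\TTT_S$ is fully faithful. Its essential image is then a thick triangulated subcategory of $\cT$ containing $S$ (as $\TTT_S(S)\cong S[1-k]$) and all of $S^{\perp}$. For an arbitrary $E$, the evaluation triangle $\Hom^*(S,E)\otimes S\to E\to C$ has cone $C\in S^{\perp}$, because applying $\Hom^*(S,\blank)$ turns its first map into the split-surjective action of $\End^*(S)$ on $\Hom^*(S,E)$. Hence every object is built from $S$ and $S^{\perp}$, the essential image is all of $\cT$, and $\TTT_S$ is an autoequivalence.
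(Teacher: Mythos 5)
Two preliminary remarks are in order. The paper does not prove this proposition at all --- it is quoted from \cite{ST} --- so your attempt has to be measured against the standard proofs (Seidel--Thomas's original one, or the spanning-class account in Huybrechts's book). Your fully-faithfulness half follows the latter: the spanning class $\{S\}\cup S^\perp$, the computations $\TTT_S(E)\cong E$ for $E\in S^\perp$ and $\TTT_S(S)\cong S[1-k]$, and the reduction to the single hard check on $\End^*(S)$ are all correct; but that hard check (that the map induced by $\TTT_S$ on $\End^*(S)$ does not annihilate the degree-$k$ generator $t$) is exactly the technical content of the theorem, and you explicitly leave it undone. So this half is a correct plan rather than a proof.

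The genuine error is in the essential surjectivity step. The cone of the evaluation triangle $\Hom^*(S,E)\otimes S\xto{\mathrm{ev}} E\to C$ is by definition $C=\TTT_S E$, and it does \emph{not} lie in $S^\perp$ unless $E$ already does. Applying $\Hom^*(S,\blank)$ turns the first map into the action map $\Hom^*(S,E)\otimes\End^*(S)\to\Hom^*(S,E)$, which is indeed surjective; but surjectivity does not make the third term of the long exact sequence vanish --- it identifies $\Hom^*(S,C)$ with the (shifted) kernel of that map, which is $\Hom^*(S,E)[-k]$, spanned by the elements $-(f\circ t)\otimes 1+f\otimes t$. Hence $\Hom^*(S,\TTT_S E)\cong\Hom^*(S,E)[1-k]$, which is nonzero whenever $\Hom^*(S,E)\neq 0$. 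Your own computation already provides the counterexample: $\TTT_S S\cong S[1-k]$, and $\Hom^*(S,S[1-k])\cong\End^*(S)[1-k]\neq 0$. (The cone of evaluation lands in $S^\perp$ precisely when $S$ is \emph{exceptional}; for a spherical object the degree-$k$ part of $\End^*(S)$, forced by the Calabi--Yau condition, obstructs this --- which is exactly why spherical objects never induce semiorthogonal decompositions.) Consequently the claim that every object is built from $S$ and $S^\perp$ is unproved, and your final step collapses. The standard repair avoids that claim entirely: once $\TTT_S$ is fully faithful with both adjoints, say right adjoint $\RR$, every $b\in\cT$ sits in a triangle $\TTT_S\RR b\to b\to c$ with $c$ right-orthogonal to the essential image; since $S[1-k]\cong \TTT_S S$ lies in the image, such a $c$ satisfies $\Hom^*(S,c)=0$, hence $c\cong\TTT_S c$ lies in the image itself, hence $\End^*(c)=0$ and $c=0$, so $b\cong\TTT_S\RR b$. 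Alternatively, \cite{ST} exhibit the dual twist as an explicit quasi-inverse.
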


\begin{proposition}
\label{prop:HTtwist}
Let $P$ be a $\IP^n[k]$-object in some algebraic triangulated category $\cT$.
Then $P$ gives rise to an autoequivalence of $\cT$, the \emph{$\IP$-twist} $\PPP_P$, which is defined in following diagram, where the tilted rows are exact triangles and $\mathrm{ev}$ lifts to the dashed arrow:
\begin{equation}\label{eq:Ptwistfunctor}
\begin{tikzcd}[row sep=0.5ex,column sep={7em,between origins}]
P\otimes \Hom^*(P,\blank) [-k]
\ar[rd, "H =  \id \otimes t^* - t \otimes \id"] \\
&P\otimes \Hom^*(P,\blank)
\ar[rd] \ar[dd, "\mathrm{ev}"]\\
&& \Cone_H(\blank) \ar[ld, dashed]\\
& \id \ar[ld] \\
\hphantom{\Hom^*(P,\blank)\otimes} \PPP_P
\end{tikzcd}
\end{equation}
\end{proposition}

\begin{remark}\label{rem:bimodtwist}
The above statement is due to \cite[Prop.\ 2.6]{HT06} in the setting of derived categories of coherent sheaves on smooth projective varieties. There, Fourier-Mukai kernels are used to overcome the issue that cones are in general not functorial, which could lead to the diagram \eqref{eq:Ptwistfunctor} not being well-defined.
In our more general set-up, where $\cT=\cD^c(\cA)$ for some dg-category $\cA$, we can ensure the existence of the functorial cones, by taking the cones in the derived category of $\cA$-$\cA$ bimodules where \eqref{eq:Ptwistfunctor} corresponds to 
\begin{equation}\label{eq:Ptwistbimod}
\begin{tikzcd}[row sep=0.5ex,column sep={7em,between origins}]
P\otimes_\kk {}^{\cA}P [-k]
\ar[rd, "H =  \id \otimes t^* - t \otimes \id"] \\
&P\otimes_\kk {}^{\cA}P
\ar[rd] \ar[dd, "\mathrm{ev}"]\\
&& \Cone_H \ar[ld, dashed]\\
& \cA \ar[ld] \\
 X
\end{tikzcd}
\end{equation}
${}^\cA P=\Hom_{\cA}^\bullet(P,\cA)$ is the $\kk$-$\cA$-bimodule representing
the functor $\Hom^*(P,\blank)$ and $\cA$ stands for the diagonal $\cA$-$\cA$-bimodule. 
The dashed arrow making the diagram commutative can actually be shown to be unique; see \cite[Lem.\ 2.1]{HT06} for the analogue in the category of Fourier--Mukai kernels or the second half of the proof of \autoref{prop:commutativity} for a very similar computation.
The proper definition of the $\IP$-twist is then $\PPP_P=X\otimes\blank$, the functor induced by the bimodule $X$ which is the cone of the unique dashed arrow. 

In \cite{ALunique}, a more general statement is shown for $\IP$-functors in place of $\IP$-objects: there we have that, even though the lift (the dashed arrow) might not be unique in general, its cone is. 

Of course, the issue of functoriality of cones already exists in the definition of the spherical twist in \autoref{prop:ST}. So the proper definition of $\TT_S$ is as the tensor product with the bimodule $Y$ defined as the cone 
\[
S\otimes_\kk {}^\cA S\xrightarrow{\mathrm{ev}}\cA\to Y\,. 
\]
\end{remark}

\begin{remark}
Associated to a spherical object $S$, we have the spherical twist $\TTT_S$ and the $\IP$-twist $\PPP_S$ which are related by
\[
\TT_{\!S}^2 \cong \PPP_S\,,
\]
see \cite[Prop. 2.9]{HT06}.
\end{remark}

The notion of spherelike and spherical objects can be generalised to functors.
Before giving the definitions, we need some canonical triangles associated to a functor $\FF \colon \cS \to \cT$ between algebraic triangulated categories, which admits both adjoints $\LL$ and $\RR$.

The \emph{twist} $\TT$ and \emph{dual twist} $\TT'$ associated to $\FF$ are defined by the triangles (using unit and counit of the adjunction):
\[
\FF\RR \to \id_{\cT} \to \TT
\quad
\text{and}
\quad
\TT' \to \id_{\cT} \to \FF\LL \,.
\]
Note that $\TT'$ is the left adjoint of $\TT$.

Similarly, the \emph{cotwist} $\CC$ and \emph{dual cotwist} $\CC'$ associated to $\FF$ are defined by the triangles
\[
\CC \to \id_{\cS} \to \RR\FF
\quad
\text{and}
\quad
\LL\FF \to \id_{\cS} \to \CC' \,.
\]
Again, $\CC'$ is the left adjoint of $\CC$.

Additionally note that we have natural morphisms
\[
\phi \colon \RR \to \RR\FF\LL \to \CC\LL[1]
\quad
\text{and}
\quad
\psi \colon \LL\TT[-1] \to \LL\FF\RR \to \RR \,.
\]

\begin{definition}[\cite{ALdg}, \cite{HM}]
Let $\FF \colon \cS \to \cT$ be a functor between algebraic triangulated categories, which admits both adjoints $\LL$ and $\RR$.

If $\CC$ is an autoequivalence, then we call $\FF$ a \emph{spherelike functor}.

If additionally $\phi \colon \RR \to \CC\LL[1]$ is an isomorphism, then we call $\FF$ a \emph{spherical functor}.
\end{definition}

\begin{proposition}[{\cite[Thm. 1.1]{ALdg}}]
Let $\FF \colon \cS \to \cT$ be a functor between algebraic triangulated categories, which admits both adjoints $\LL$ and $\RR$.

If $\FF$ satisfies two of the following four conditions, then $\FF$ satisfies all four of them:
\begin{itemize}
\item $\CC$ is an autoequivalence;
\item $\TT$ is an autoequivalence;
\item $\phi \colon \RR \to \CC\LL[1]$ is an isomorphism;
\item $\psi \colon \LL\TT[-1] \to \RR$ is an isomorphism.
\end{itemize}
In particular, such an $\FF$ is a spherical functor.
\end{proposition}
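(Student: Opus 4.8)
The plan is to exploit three structural features already visible in the statement: the adjunctions $\TT'\dashv\TT$ and $\CC'\dashv\CC$, a duality obtained by passing to opposite categories, and a family of comparison triangles that hold unconditionally. First, since $\TT'$ is left adjoint to $\TT$, the functor $\TT$ is an autoequivalence if and only if $\TT'$ is, in which case $\TT'\cong\TT^{-1}$; likewise $\CC'\cong\CC^{-1}$ once $\CC$ is an equivalence. Passing to opposite categories turns $\FF$ into a functor $\cA^{\mathrm{op}}\to\cB^{\mathrm{op}}$ whose left and right adjoints are $\RR^{\mathrm{op}}$ and $\LL^{\mathrm{op}}$; this interchanges $\TT$ with $\TT'$ and $\CC$ with $\CC'$, and carries $\phi$ to a shift of $\psi$. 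Combined with the previous remark, this symmetry leaves the conditions ``$\CC$ equiv'' and ``$\TT$ equiv'' stable and swaps ``$\phi$ iso'' with ``$\psi$ iso'', so the six pairs reduce to essentially three cases: two equivalences, one equivalence together with one isomorphism, and the two isomorphisms.

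The second step is to produce the comparison triangles. I would apply $\FF$, $\LL$ and $\RR$ to the defining triangles of $\CC$ and $\TT$ and glue them using the zigzag identities for the two adjunctions. The composite $\FF\xrightarrow{\FF\eta}\FF\RR\FF\xrightarrow{\varepsilon\FF}\FF$ being the identity splits copies of $\FF$ off $\FF\RR\FF$ and yields a canonical comparison morphism between $\FF\CC$ and $\TT\FF$ (up to shift), and symmetrically one between $\LL\TT$ and $\RR$ (equivalently $\CC\LL$, up to shift). The point is that $\phi$ and $\psi$ are exactly the obstruction maps measuring these comparisons: when $\phi$ is an isomorphism the comparison upgrades to a natural isomorphism intertwining twist and cotwist through $\FF$, of the form $\TT\FF\cong\FF\CC$ up to shift, and when $\psi$ is an isomorphism one gets the adjoint intertwining relating $\LL\TT$ and $\RR$.

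The core implication is the one reproducing the classical ``spherical $\Rightarrow$ invertible twist'': assuming $\CC$ is an equivalence and $\phi$ is an isomorphism, the relation $\RR\cong\CC\LL[1]$ expresses the two adjoints of $\FF$ as differing by the autoequivalence $\CC$ up to shift. I would feed this into the composites $\TT\TT'$ and $\TT'\TT$ assembled from the two twist triangles and check that the cross terms cancel, yielding $\TT\TT'\cong\id_\cB\cong\TT'\TT$; hence $\TT$ is an autoequivalence, and then the comparison triangle forces $\psi$ to be an isomorphism as well. The opposite-category duality supplies the cases beginning from ``$\TT$ equiv''. The pair ``$\CC,\TT$ both equiv'' follows because the comparison isomorphisms then force $\phi$ and $\psi$ to be isomorphisms, and the remaining pair ``$\phi,\psi$ both iso'' is reduced to the core case by combining $\RR\cong\CC\LL[1]$ with the $\psi$-relation to get $\CC\LL\cong\LL\TT$ up to shift, and producing the inverse of $\CC$ by a generation argument on the image of $\FF$.

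I expect the genuine obstacle to be coherence: in a bare triangulated category cones are not functorial, and the octahedra needed to glue the comparison triangles and to verify the cancellation in $\TT\TT'$ cannot be carried out canonically. This is precisely why the result is proved in \cite{ALdg} at the level of DG-enhancements, where all the functors are represented by bimodules and the twists by explicit convolutions; the standing assumption of this paper that every functor lifts to the enhancements is what makes the manipulations above rigorous. I would therefore run Steps two and three after replacing each triangle by its canonical convolution, so that the comparison morphisms, the intertwinings, and the inverse of $\TT$ are produced as honest morphisms of DG-functors rather than as non-canonical lifts.
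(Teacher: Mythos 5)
The first thing to note is that the paper contains \emph{no proof} of this proposition: it is imported verbatim from Anno--Logvinenko \cite{ALdg} and used as a black box, so the paper's only ``approach'' is citation. Your sketch does correctly identify the two load-bearing features of the actual proof in that reference --- that the argument must be run at the level of dg-enhancements/bimodules, where cones of natural transformations exist functorially, and that formal dualities cut down the number of cases --- but as a proof it has genuine gaps, so it cannot be accepted as a replacement for the citation.

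Two gaps are concrete. First, the symmetry you invoke does not do what you need. Passing to opposite categories does exchange $\LL \leftrightarrow \RR$, $\TT \leftrightarrow \TT'$ and $\CC \leftrightarrow \CC'$, but it does \emph{not} carry $\phi$ to a shift of $\psi$: the map $\phi$ is manufactured from the cotwist triangle and $\psi$ from the twist triangle, and under the op-duality $\phi$ is sent to a map comparing the dual cotwist of $\FF^{\mathrm{op}}$ (composed with the right adjoint of $\FF^{\mathrm{op}}$) against the left adjoint of $\FF^{\mathrm{op}}$ --- this is neither the $\phi$-map nor the $\psi$-map of $\FF^{\mathrm{op}}$, but a third, ``dual'' comparison map. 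The conditions ``$\phi$ is an isomorphism'' and ``$\psi$ is an isomorphism'' are only interchangeable once one already knows the relevant (co)twist is an equivalence, which is part of what is to be proved; so your reduction from six pairs of hypotheses to three cases collapses. Second, exactly the hardest implications are asserted rather than proved: ``feed this into $\TT\TT'$ and check that the cross terms cancel'' is precisely the octahedral/bimodule computation that constitutes the technical bulk of \cite{ALdg}, and in the case where only $\phi$ and $\psi$ are assumed invertible, your proposed ``generation argument on the image of $\FF$'' has no basis --- nothing in the hypotheses guarantees that the image of $\FF$ generates $\cB$, nor that the image of $\LL$ or $\RR$ generates $\cA$, and this two-isomorphisms case is among the delicate ones in the reference. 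In short, your text is a reasonable road map into \cite{ALdg}, but not a proof.
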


The notion is motivated by the following example.

\begin{example}
Let $S$ be some object in an algebraic triangulated category $\cT$, which admits a Serre dual.
Then the functor $\FF_S = S\otimes \blank \colon \cD(\kk\hh\mod) \to \cT$ admits both adjoints.

Moreover, we have that $\FF_S$ is a spherelike/spherical functor, if and only if, $S$ is a spherelike/spherical object, respectively.
\end{example}

\begin{remark}
There is also the generalisation of $\IP$-objects to $\IP$-functors.
We do not give the definition here, as we will not use it.
We refer to \cite{Add} where the notion of a (split) $\IP$-functor was first introduced,
this was generalised to (possibly non-split) $\IP$-functors in \cite{AL19}.
\end{remark}

\section{From $\IP$-objects to spherical objects}

\subsection{Introducing spherification functors}
\label{sec:spherification:definition}

Recall that for us, a functor $\FF\colon \cS\cong \cD^c(\cA)\to \cT\cong \cD^c(\cB)$ of algebraic triangulated categories is always of the form $\FF\cong M\otimes \blank$ for some $\cB$-$\cA$-module $M$ which is $\cB$-perfect. The latter assumption is needed to ensure that $\FF$ sends compact objects to compact objects (see \cite[\S 2.1.6]{ALdg}), but also gives a right adjoint $\RR\cong {}^\cB M\otimes \blank$ of $\FF$, where ${}^\cB M=\RHom_{\cB}(M,\cB)$. If $M$ is also $\cA$ perfect, $\FF$ also has a left adjoint $\LL\cong {}^\cA M\otimes\blank$ where ${}^\cA M=\RHom_{\cA}(M,\cA)$; see \cite[Cor.\ 2.2]{ALdg}. In this case, we say that $\FF$ is \emph{representated by a bimodule which is perfect over both sides}.  

Recall that then there is the triangle
\[
\LL\FF \xto{\eps} \id \xto{\alpha} \CC'
\]
where $\CC'$ is the \emph{dual cotwist} of $\FF$.
In the following, the morphism $\alpha \colon \id \to \CC'$ will be of importance.

\begin{definition}\label{def:spherification}
Let $\FF \colon \cS \to \cT$ be a functor of algebraic triangulated categories, which is represented by a bimodule which is perfect over both sides.
We say that $\FF$ is a \emph{weak spherification functor} for a $\IP^n[k]$-like object $P$ if 
\begin{itemize}
\item $\CC' P \cong P[k]$ and
\item the natural morphism $\alpha_P \colon P \to P[k]$ is a generator of $\Hom^*(P,P)$.
\end{itemize}
\end{definition}

\begin{remark}\label{rem:alphagen}
Since $P$ is a $\IP^n$-object, $\alpha_P$ being a generator of $\Hom^*(P,P)$ implies that $\Hom^*(P,P) = \kk[\alpha_P]/\alpha_P^{n+1}$.
\end{remark}

\begin{remark}\label{rem:kneq0}
If $k\neq0$, it is sufficient to ask that $\alpha_P$ is non-zero. Then it generates $\Hom^*(P,P)$ for degree reasons.
\end{remark}

We justify the name by the following lemma.

\begin{lemma}
\label{lem:weak:spherification}
Let $\FF \colon \cS \to \cT$ be a functor of algebraic triangulated categories,  which is represented by a bimodule which is perfect over both sides.
Let $P$ be a $\IP^n[k]$-like object in $\cS$.

If $\FF \colon \cS \to \cT$ is a weak spherification functor for $P$,
then $\FF P$ is a $(nk+k-1)$-spherelike object.

Conversely, if $\FF P$ is a $(nk+k-1)$-spherelike object and $\CC'P \cong P[k]$, then $\FF$ is a weak spherification functor for $P$.
\end{lemma}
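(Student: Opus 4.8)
The plan is to compute the graded endomorphism algebra $\Hom^*(\FF P, \FF P)$ and show it is isomorphic to $\kk[s]/s^2$ with $s$ in the appropriate degree, together with the Calabi--Yau property, which together characterise a spherelike (resp.\ spherical) object. The essential tool is the adjunction $(\LL,\FF)$, which gives the natural isomorphism $\Hom^*(\FF P, \FF P) \cong \Hom^*(\LL\FF P, P)$, reducing everything to a computation inside $\cA$.

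First I would invoke the defining triangle of the dual cotwist, $\LL\FF \xrightarrow{\eta} \id \xrightarrow{\alpha} \CC'$, applied to $P$. Using the weak spherification hypothesis $\CC'P \cong P[k]$, this becomes a triangle $\LL\FF P \to P \xrightarrow{\alpha_P} P[k]$, so $\LL\FF P \cong \Cone(\alpha_P)[-1]$. Now I apply $\Hom^*(\blank, P)$ to this triangle to get a long exact sequence relating $\Hom^*(\LL\FF P, P)$ to $\Hom^*(P,P)$ and $\Hom^*(P[k],P) \cong \Hom^*(P,P)[-k]$. The connecting maps in this sequence are, up to sign, multiplication by $\alpha_P$ on $\Hom^*(P,P) = \kk[\alpha_P]/\alpha_P^{n+1}$, since $\alpha_P$ generates the endomorphism algebra (this is exactly the content of the parenthetical remark in the definition of weak spherification). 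Because $\alpha_P$ is a degree-$k$ element with $\alpha_P^{n+1}=0$, multiplication by $\alpha_P$ has a one-dimensional kernel (spanned by $\alpha_P^n$) and a one-dimensional cokernel (spanned by $1$). Tracking these through the long exact sequence should collapse $\Hom^*(\LL\FF P, P)$ onto a two-dimensional space, giving the spherelike endomorphism algebra, with the nonzero class appearing in the claimed total degree $nk+k-1$.

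The main obstacle I expect is twofold. First, I must check that the relevant connecting map really is multiplication by $\alpha_P$ and not zero; this is where the non-vanishing hypothesis $\alpha_P \neq 0$ is crucial, and I would need the fact that $\alpha_P$ generates $\End^*(P)$ to identify the whole map rather than just one graded piece. Second, and more seriously, proving \emph{spherelike} requires not only the correct endomorphism algebra but the Calabi--Yau duality $\Hom^*(\FF P,\blank) \cong \Hom^*(\blank, \FF P[nk+k-1])^\vee$. I would expect the Serre-duality structure to be inherited from the adjunction: the left adjoint $\LL$ and the shift in $\CC'P \cong P[k]$ should encode a Serre functor relation, and one combines the Calabi--Yau property of $P$ (it is $\IP^n[k]$-like) with the degree shift $k$ to produce the new Calabi--Yau dimension $nk + k - 1 = nk + (k-1)$. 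Getting the degree bookkeeping exactly right, and verifying the duality rather than just the dimension count, is the delicate part.

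For the converse, I would run the same long exact sequence backwards: assuming $\FF P$ is $(nk+k-1)$-spherelike and $\CC'P \cong P[k]$, the identity $\Hom^*(\LL\FF P,P) \cong \kk \oplus \kk[-(nk+k-1)]$ forces the connecting map $\alpha_P$ to be nonzero, since if $\alpha_P$ were zero the long exact sequence would split and produce too large an endomorphism space. This establishes the second bullet of the weak spherification definition, with the first bullet being assumed, so $\FF$ is a weak spherification functor for $P$.
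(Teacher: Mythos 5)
Your core computation is correct and is essentially the paper's own proof: by adjunction $\Hom^*(\FF P,\FF P)\cong\Hom^*(\LL\FF P,P)$, and applying $\Hom^*(\blank,P)$ to the triangle $\LL\FF P\to P\xto{\alpha_P}P[k]$ one sees that precomposition with $\alpha_P$ acts as multiplication by $\alpha_P$ on $\kk[\alpha_P]/\alpha_P^{n+1}$, with one-dimensional kernel $\langle\alpha_P^n\rangle$ and cokernel $\langle 1\rangle$, leaving exactly $\kk\oplus\kk[-(nk+k-1)]$; the converse by splitting the triangle when $\alpha_P=0$ is also the paper's argument.

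However, there is a genuine error in your framing: you have misread the definition of \emph{spherelike}. In this paper, spherelike means $\IP^1[k']$-like, i.e.\ \emph{only} the condition $\End^*(\FF P)\cong\kk[s]/s^2$; the Calabi--Yau duality is what upgrades spherelike to \emph{spherical}. So the step you single out as ``the delicate part'' --- proving $\Hom^*(\FF P,\blank)\cong\Hom^*(\blank,\FF P[nk+k-1])^\vee$ --- is not part of the statement, and you never complete it. This matters beyond bookkeeping, because that step is not just unnecessary but actually unprovable under the lemma's hypotheses: $P$ is only assumed $\IP^n[k]$-\emph{like} (your parenthetical ``the Calabi--Yau property of $P$ (it is $\IP^n[k]$-like)'' conflates $\IP^n[k]$-like with $\IP^n[k]$-object, the same confusion again), and $\FF$ is only assumed to have a left adjoint. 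Indeed, the failure of $\FF P$ to be Calabi--Yau in general is precisely why the paper distinguishes \emph{weak} spherification functors from spherification functors: the latter notion adds the $(nk+k-1)$-Calabi--Yau property of $\FF P$ as an extra requirement, and it is only established (in \autoref{lem:spherification}) under the much stronger hypothesis that $\FF$ is a spherical functor with $\CC'\cong[k]$. Once you drop this extraneous goal, what remains of your argument is complete and correct.
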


\begin{proof}
Let $\FF$ be a weak spherification functor for $P$.
Apply $\Hom^*(\blank,P)$ to the triangle $\LL\FF P \to P \to \CC'P \cong P[k]$ to get
\begin{equation}\label{eq:Hom:triangle}
\Hom^*(\FF P,\FF P) \from \Hom^*(P,P) \xfrom{\alpha_P^*} \Hom^*(P[k],P)\,.
\end{equation}
Under the isomorphism $\Hom^*(P,P) \cong \kk[\alpha_P]/\alpha_P^{n+1}$, the map $\alpha_P^*$ induces isomorphisms on the components $\kk\cdot \alpha_P^i[-k] \xto{\sim} \kk\cdot \alpha_P^{i+1}$ for $0 \leq i < n$. It follows that $\Hom^*(\FF P,\FF P) \cong \kk[s]/s^2$ with $\deg(s)=nk+k-1$.
In other words, $\FF P$ is a $(nk+k-1)$-spherelike object.

Now suppose that $\CC' P \cong P[k]$, and that $\FF P$ is a $(nk+k-1)$-spherelike object, which means that $\Hom^*(\FF P, \FF P)\cong \IC[s]/s^2$ with $\deg s=nk+k-1$.
Using triangle \eqref{eq:Hom:triangle}, this is only possible if $\alpha_P\colon P \to P[k]$ is non-zero. 
In the case $k\neq 0$ this already suffices to conclude that $\FF$ is a weak spherification functor by \autoref{rem:kneq0}. 

For $k=0$, we have to study $\alpha_P\in \Hom(P,P)\cong \kk[t]/t^{n+1}$ a bit closer. Note that $\alpha_P^*$ is just given by multiplication in $\kk[t]/t^{n+1}$ with a polynomial that we denote again by $\alpha_P$. 
Again by \eqref{eq:Hom:triangle} this multiplication map must have a one-dimensional kernel. Hence 
\[
 \alpha_P= t \cdot u
\]
for some invertible polynomial $u \in \kk[t]/t^{n+1}$ (that is, $u$ has some non-zero constant term). It follows that $\alpha_P$ is a generator of $\Hom(P,P)\cong \kk[t]/t^{n+1}$. This is what we needed to check for $\FF$ to be a weak spherification functor.
\end{proof}

\begin{remark}
The only reason, why we prefer the left adjoint of $\FF$, is that the triangle  $\LL\FF \to \id \to \CC'$  appears in the geometric situation of \autoref{prop:geometric:spherification} which is our point of departure.
There is an analogous definition using the right adjoint of $\RR$, which yields similar statements.
\end{remark}

\begin{definition}
A \emph{spherification functor} $\FF \colon \cS \to \cT$ for a $\IP^n[k]$-like object $P$ is a weak spherification functor for $P$ such that $\FF P$ is a $(nk+k-1)$-Calabi--Yau object in $\cT$.
\end{definition}

\begin{remark}
Let $\FF \colon \cS \to \cT$ be a spherification functor for a $\IP^n[k]$-like object $P$.
Then by \autoref{lem:weak:spherification}, we have that $\FF P$ is $(nk+k-1)$-spherelike, and by definition of spherification functor that $\FF P$ is $(nk+k-1)$-Calabi--Yau.
Therefore $\FF P$ is a $(nk+k-1)$-spherical object in $\cT$.
\end{remark}

\begin{remark}
From now on, we will always assume that $n \ge2$ for all our $\IP^n[k]$-like objects. In other words, we exclude weak spherification of $k$-spherelike objects. This is coherent with our two main results \autoref{main:spherification} and \autoref{main:relations}.

We do not make any general assumptions on the value of $k$, except in \autoref{prop:commutativity} and therefore in \autoref{main:relations}; see \autoref{rem:k} for a few more comments on this. In the derived categories of smooth projective varieties, due to Serre duality, only $\IP^n[k]$-objects with $k>0$ can appear. 
In representation theory, however, $\IP^n[k]$-objects with $k\le 0$ can occur; see \autoref{rem:negativePk}.
The proof of \autoref{prop:pushforward}, which is an important ingredient of our proof of \autoref{main:relations}, would have been shorter (but not really conceptually easier) if we restricted ourselves to the case $k>0$, but we decided to go with the greater generality.

We note in passing that most of the following results still apply to the case $n=1$, that is,  $k$-spherelike objects (but in some cases the proofs must be slightly adapted) as long as $k>1$, but some break in the special case $k=1$. We decided it was not worth the complications that would arise when including always the special case $n=1$. The reason is that the question of relations of pairs of twists for $n=1$ is already settled by the results of \cite{Volkov}; see \autoref{rem:n=1}.

It is an idiosyncrasy of our definition that a weak spherification functor turns a $k$-spherelike object into a $(2k-1)$-spherelike object.
Note that, contrary to what one might expect from the name, the identity functor is not a weak spherification functor of spherelike objects as it has $\CC'=0$.

This remark applies also to spherification of spherical objects, mutatis mutandis.
\end{remark}

\begin{lemma}
\label{lem:spherification}
Let $\FF \colon \cS \to \cT$ be a spherelike functor with $\CC' \cong [k]$.
Then $\FF$ is a weak spherification functor for all $\IP^n[k]$-like objects $P$ such that the natural morphism $\alpha_P \colon P \to \CC'P \cong P[k]$ is a generator of $\Hom^*(P,P)$.

If $\FF$ is a spherical functor with $\CC' \cong [k]$, then $\FF$ is a spherification functor for all $\IP^n[k]$-objects $P$ with $\alpha_P\neq 0$.
\end{lemma}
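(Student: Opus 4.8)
The first assertion I would settle by simply unwinding the definitions. Since $\CC' \cong [k]$ as functors, evaluating at $P$ gives $\CC' P \cong P[k]$, which is the first condition for $\FF$ to be a weak spherification functor for $P$. The second condition, that the natural morphism $\alpha_P \colon P \to \CC' P \cong P[k]$ be non-zero, is exactly the hypothesis imposed on $P$. So nothing more is needed here.

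For the second assertion I would argue as follows. A spherical functor is in particular spherelike, so the first part already shows that $\FF$ is a weak spherification functor for the $\IP^n[k]$-object $P$; by \autoref{lem:weak:spherification} this makes $\FF P$ a $(nk+k-1)$-spherelike object. Thus the only thing left to prove is that $\FF P$ is $(nk+k-1)$-Calabi--Yau, which would promote $\FF$ to a genuine spherification functor for $P$.

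The plan for the Calabi--Yau statement is to transport the Serre duality of $P$ through the adjunctions of $\FF$. First I would extract a relation between the two adjoints. As $\FF$ is spherelike, the cotwist $\CC$ is an autoequivalence and $\CC'$ is its left adjoint; the left adjoint of an autoequivalence is its inverse, so $\CC' \cong [k]$ forces $\CC \cong [-k]$. The spherical hypothesis then makes $\phi \colon \RR \to \CC\LL[1]$ an isomorphism, so that
\[
\RR \cong \CC\LL[1] \cong [-k]\LL[1] \cong \LL[1-k].
\]
With this identification I would then compute, naturally in $B \in \cB$,
\[
\begin{aligned}
\Hom^*(\FF P, B)
&\cong \Hom^*(P, \RR B)
\cong \Hom^*(\RR B, P[nk])^\vee \\
&\cong \Hom^*(\LL B, P[nk+k-1])^\vee
\cong \Hom^*(B, \FF P[nk+k-1])^\vee,
\end{aligned}
\]
using the $(\FF,\RR)$-adjunction, the $nk$-Calabi--Yau property of $P$, the substitution $\RR B \cong \LL B[1-k]$, and the $(\LL,\FF)$-adjunction, in that order. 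This exhibits $\FF P[nk+k-1]$ as a Serre dual of $\FF P$.

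The genuinely new ingredient is the relation $\RR \cong \LL[1-k]$ coming from the spherical structure; after that the computation is routine adjunction calculus. The point I would be most careful about is naturality in $B$ throughout the displayed chain, since that is what turns a degreewise isomorphism into an actual Serre duality, together with the bookkeeping of the shifts that combine $nk$ with the $k$ coming from $\CC'$ into the expected $nk+k-1$.
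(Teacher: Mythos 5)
Your proposal is correct and takes essentially the same route as the paper: the first part is immediate from the definition, and the second part is the same chain of adjunction isomorphisms combined with the $nk$-Calabi--Yau property of $P$ and the spherical identity $\RR \cong \CC\LL[1]$. The only cosmetic difference is that you convert $\CC' \cong [k]$ into the functor-level identification $\CC \cong [-k]$ (hence $\RR \cong \LL[1-k]$) at the start, whereas the paper carries $\CC$ through the computation and invokes $\CC'P \cong P[k]$ at the object level in the penultimate step.
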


\begin{proof}
The first part holds by the definition of a weak spherification functor.
Note that we are asking much more than necessary: $\CC'$ needs to be a shift only for the $\IP[k]$-like objects in question, a spherelike functor admits also a right adjoint which is not needed here.

Assume now that $\FF$ is a spherical functor, that is, $\CC$ is an equivalence  with $\CC^{-1} = \CC'$ and $\RR \cong \CC\LL[1]$.
We compute
\[
\begin{split}
\Hom^*(\FF P,\blank) & \cong \Hom^*(P, \RR \blank) \cong \Hom^*(\RR\blank, P[nk])^\vee \cong \\
& \cong \Hom^*(\CC\LL\blank[1],P[nk])^\vee \cong \Hom^*(\LL \blank, \CC^{-1} P[nk-1])^\vee \cong \\
& \cong \Hom^*(\blank, \FF P[nk+k-1])^\vee
\end{split}
\]
where we used adjunction, that $P$ is a $nk$-Calabi--Yau object, and $\CC' P \cong P[k]$.

Hence $\FF P$ is a $(nk+k-1)$-Calabi--Yau object, and therefore $\FF$ is a spherification functor.
\end{proof}

\subsection{The guiding example}

The following proposition is the motivation for our definition of a spherification functor, coming from \cite{HT06}.

Let $X$ be a smooth projective variety. 
Suppose there is a smooth family $\cX \to C$ over a smooth curve $C$ with 
distinguished fibre $j \colon X = \cX_0 \into \cX$, where $0 \in C$ is a closed point.
Note that the inclusion $j \colon X \to \cX$ gives rise to the push-forward functor $j_* \colon \cD(X) \to \cD(\cX)$ which admits both adjoints.

The family gives rise to the Atiyah class $\AA(E)$ for $E \in \cD(X)$ and the Kodaira-Spencer class $\kappa(\cX)$.
For further information on these classes, see \cite{HT10}.

We reformulate the following proposition of \cite{HT06} using the language of spherification functors.

\begin{proposition}[{\cite[Prop. 1.4]{HT06}}]
\label{prop:geometric:spherification}
Let $\cX \to C$ be a smooth family over a smooth curve $C$,
and let $j \colon X \to \cX$ be the inclusion of a fiber $X$.
Then $j_*$ is a spherification functor for all $\IP$-objects $P$ with $\AA(P)\cdot \kappa(\cX) \neq0$.
\end{proposition}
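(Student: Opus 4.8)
The plan is to realise $j_*$ as a spherical functor whose dual cotwist is the shift $[2]$, so that the claim reduces to \autoref{lem:spherification} together with the identification $\alpha_P = \AA(P)\cdot\kappa(\cX)$. First I would record the adjoints of $\FF = j_*$: its left adjoint is $\LL = j^*$ and its right adjoint is $\RR = j^!$. Since $X = \cX_0$ is a fibre of $\cX \to C$, the normal bundle $N_{X/\cX}$ is the restriction to $X$ of the line bundle of the Cartier divisor $X \subset \cX$, hence trivial, $N_{X/\cX}\cong\mathcal{O}_X$. Grothendieck duality for the codimension-one embedding then gives $j^! E \cong j^* E \otimes N_{X/\cX}[-1] \cong j^*E[-1]$.

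Next I would compute the dual cotwist from the standard base-change triangle for a divisor,
\[
E \otimes N_{X/\cX}^{-1}[1] \to j^* j_* E \xto{\eta} E \xto{\alpha_E} E\otimes N_{X/\cX}^{-1}[2],
\]
whose map $\eta$ is the counit of $\LL \dashv \FF$. Thus $\CC' E = \Cone(\eta) \cong E\otimes N_{X/\cX}^{-1}[2]$, and with $N_{X/\cX}$ trivial we obtain $\CC'\cong[2]$ as functors; in particular $\CC'$ is an autoequivalence, so $\FF$ is spherelike. To upgrade this to spherical, I would invoke the two-out-of-four criterion: the cotwist $\CC$, being right adjoint to $\CC'\cong[2]$, satisfies $\CC\cong[-2]$ and is an equivalence, while under the identifications above both $\RR=j^!$ and $\CC\LL[1]$ equal $j^*(\blank)[-1]$, so the natural morphism $\phi\colon\RR\to\CC\LL[1]$ is an isomorphism. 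Hence $\FF = j_*$ is a spherical functor with $\CC'\cong[2]$, and \autoref{lem:spherification} applies to every $\IP^n$-object $P$ with $\alpha_P\neq0$.

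It remains to identify the connecting map $\alpha_P\colon P \to \CC'P\cong P[2]$ of the triangle above with the product $\AA(P)\cdot\kappa(\cX)$, viewed in $\Hom^2(P,P)$ via the trivialisation $N_{X/\cX}\cong\mathcal{O}_X$. This is the geometric core of \cite[Prop.~1.4]{HT06}: the extension class measuring the failure of $j^*j_*P$ to split is computed by contracting the Atiyah class $\AA(P)$ against the Kodaira--Spencer class $\kappa(\cX)$ of the family (see \cite{HT10}). I expect this identification to be the main obstacle, since it requires the deformation-theoretic description of the comparison between $j^*j_*$ and the identity; the remaining steps are formal manipulations with adjunctions and triangles. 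Granting it, the hypothesis $\AA(P)\cdot\kappa(\cX)\neq0$ is precisely $\alpha_P\neq0$, and \autoref{lem:spherification} then shows that $j_*$ is a spherification functor for all such $\IP$-objects $P$.
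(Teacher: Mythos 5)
Your proposal is correct and follows essentially the same route as the paper, whose justification appears in the remark directly after the proposition: identify the dual cotwist of $j_*$ with $[2]$ via the divisor triangle, recognise the connecting map as $\AA(\blank)\cdot\kappa(\cX)$ (the content of \cite{HT06}, which both you and the paper quote rather than reprove), note that $j_*$ is a spherical functor because $j$ is the inclusion of a divisor, and conclude by \autoref{lem:spherification}. The only place you add detail beyond the paper is the two-out-of-four verification of sphericalness, which is fine in outcome, though strictly speaking the abstract isomorphisms $\RR \cong j^*(\blank)[-1] \cong \CC\LL[1]$ do not by themselves show that the canonical transformation $\phi$ is an isomorphism --- the paper sidesteps this by simply invoking the standard fact.
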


\begin{remark}
Note that the condition $\AA(P)\cdot \kappa(\cX) \neq0$ is exactly the one asked for in the definition of a weak spherification functor, as the triangle of the dual cotwist to $j^*$ is
\[
j^* j_* \to \id \xto{\AA(\blank)\cdot \kappa(\cX)} [2] \,.
\]
Also note that $j_*$ is a spherical functor, as $j \colon X \to \cX$ is the inclusion of a divisor.
So by \autoref{lem:spherification} and \autoref{rem:kneq0}, $j_*$ is indeed a spherification functor for all $\IP$-objects $P$ with $\AA(P)\cdot \kappa(\cX) \neq0$.

Note that associated to a $\IP$-object $P \in \cD(X)$, we have a $\IP$-functor $\FF_P = P \otimes \blank \colon \cD(\kk\hh\mod) \to \cD(X)$. 
Hence the proposition can be reformulated to the statement that the composition $j_* \circ \FF_P$ becomes a spherical functor.
So one might wonder, whether there is a condition generalising $\AA(\blank) \cdot \kappa(\cX) \neq 0$ (or the one of \autoref{lem:spherification} and \autoref{rem:kneq0}),
which guarantees that the composition of a $\IP$-functor and a spherical functor becomes spherical.
See \cite{MR19}, where this question is discussed for the spherical functor $j_*$.
\end{remark}

\begin{example}[{\cite[Ex. 1.5]{HT06}}]
\label{ex:geometric}
Let $X$ be a hyperk\"ahler variety of dimension $2n$. 
Then $X$ allows a $\IP^1$-family of complex structures, which fit together into an analytic manifold $\cX$, the \emph{twistor space} of $X$, see, for example, \cite{Huybrechts-habil} for a background.
So $X$ provides both a wealth of $\IP$-objects and a smooth (analytic) family $\cX \to \IP^1$.

Any non-trivial line bundle $L$ is a $\IP^n$-object in $\cD(X)$ with $\AA(L)\cdot \kappa(\cX) \neq0$. 
So $j_*$ is a spherification functor for any non-trivial line bundle $L$, that is, $j_* L$ is a spherical object in $\cD(\cX)$.

Note that $\cO_X$ cannot be spherified this way: it deforms to $\reg_{\cX}$.
Equivalently, $\AA(\cO_X)\cdot \kappa(\cX)$ vanishes, because of the splitting
\[
 j^* j_* \cO_X \cong \cO_X \oplus \cO_X[-1] \,.
\]
Hence, $j_*$ is not a spherification functor for $\cO_X$.

Also for any $\IP^n \subset X$, we get that $\cO_{\IP^n}$ is a $\IP^n$-object in $\cD(X)$ with $\AA(\cO_{\IP^n})\cdot \kappa(\cX) \neq0$. 
Again, $j_*$ is a spherification functor for such $\cO_{\IP^n}$, so $j_* \cO_{\IP^n}$ is a spherical object in $\cD(\cX)$.
\end{example}

\subsection{Existence of spherification functors}
\label{sec:existence}

The geometric situation of \autoref{prop:geometric:spherification} establishes already the existence of a spherification functor for many $\IP^n$-objects.
In this section, we construct spherification functors for more general $\IP^n[k]$-objects.

\begin{lemma}
\label{lem:formal:h:natural}
Let $A$ be a dg-algebra over $\kk$ and
let $h$ be a homogeneous element of even degree $k$  which is central and closed.
Then the cone of the multiplication by $h$
\[
A[-k] \xto{h\cdot} A \to B
\]
can be turned into a dg-algebra over $A$.
\end{lemma}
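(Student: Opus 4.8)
The plan is to exhibit an explicit dg-algebra structure on the cone $B$ such that the natural map $A \to B$ becomes a morphism of dg-algebras (this is what "dg-algebra over $A$" should mean), and then to check the Leibniz rule and associativity. The cleanest model for the cone of multiplication by a central homogeneous element $h$ of even degree $k$ is the Koszul-type construction: I would take $B = A\langle \varepsilon \rangle$, the free graded-commutative extension of $A$ by a single odd generator $\varepsilon$ of degree $k-1$ (odd because $k$ is even), subject to $\varepsilon^2 = 0$ and $\varepsilon$ central (which is forced if we declare $\varepsilon$ to super-commute with $A$, and since $\deg \varepsilon$ is odd this means $\varepsilon a = (-1)^{|a|} a \varepsilon$). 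As a graded $A$-module, $B = A \oplus A\varepsilon \cong A \oplus A[-(k-1)]$, matching the underlying object of the cone $A[-k] \xrightarrow{h\cdot} A$.

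**The differential.** The key point is to equip $B$ with a differential $d_B$ that restricts to $d_A$ on $A$ and encodes multiplication by $h$ on the $\varepsilon$-summand. I would set
\[
d_B(\varepsilon) = h, \qquad d_B(a + b\varepsilon) = d_A(a) + d_A(b)\varepsilon + (-1)^{|b|} b\, h
\]
up to the sign conventions dictated by $\varepsilon$ being odd. Because $h$ is central and $d_A$-closed (it is a homogeneous element of the algebra; I would note that the statement implicitly uses that $h$ is a cycle, or I would treat $A$ with trivial differential as in the formal application the lemma is aimed at), one checks $d_B^2 = 0$ on generators and extends by the Leibniz rule. The identification with the mapping cone is then immediate: the sub-$A$-module $A\varepsilon$ is the shifted copy $A[-k]$ after accounting for the internal degree shift by $\deg\varepsilon = k-1$ versus the cone shift, and $d_B|_{A\varepsilon}$ is precisely the multiplication-by-$h$ map into $A$, together with the internal differential.

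**Verifying the algebra axioms.** Next I would verify that $d_B$ is a derivation for the multiplication on $B$ and that the multiplication is associative. Associativity and graded-commutativity reduce to the corresponding properties of $A$ together with $\varepsilon^2 = 0$ and the centrality/super-commutativity of $\varepsilon$; these are formal once the sign rule for the odd generator is fixed. The Leibniz rule $d_B(xy) = d_B(x)y + (-1)^{|x|}x\, d_B(y)$ needs to be checked on the mixed products $a \cdot \varepsilon$ and $\varepsilon \cdot \varepsilon$, where the centrality of $h$ and the fact that $h$ has even degree (so the signs coming from moving $h$ past elements vanish in the right way) are exactly what make the computation close up. I expect this to be the only place where the hypotheses "$h$ central" and "$\deg h$ even" are genuinely used.

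**Main obstacle.** The substantive content, and the step I expect to require the most care, is pinning down the sign conventions so that $d_B$ is simultaneously a differential ($d_B^2 = 0$) and a derivation, while the inclusion $A \hookrightarrow B$ stays a dg-algebra map. The evenness of $k$ is essential here: it forces $\varepsilon$ to be an odd generator, which is what makes $\varepsilon^2 = 0$ compatible with graded-commutativity and what produces the correct Koszul signs in the Leibniz rule; were $k$ odd, $\varepsilon$ would be even and one could not kill $\varepsilon^2$ without additional relations, breaking the construction. Centrality of $h$ guarantees that left and right multiplication by $h$ agree, so that the map $A[-k]\xrightarrow{h\cdot}A$ is genuinely an $A$-bimodule map and $B$ inherits an honest (not merely one-sided) algebra structure. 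Once these sign choices are fixed consistently, all remaining verifications are routine bookkeeping.
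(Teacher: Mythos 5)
Your proposal is correct and follows essentially the same route as the paper's proof: both realise $B$ as the Koszul-type extension $A[\eps]/\eps^2$ with an odd generator $\eps$ of degree $k-1$, differential determined by $\eps\mapsto h$, and the super-commutation rule $\eps a=(-1)^{|a|}a\eps$, with centrality of $h$ and evenness of $k$ entering exactly where you say they do. Your side remark that $h$ must implicitly be a cycle (so that multiplication by $h$ is a chain map and $d_B^2=0$) is a valid point of care that the paper leaves tacit, and it is automatic in the paper's application where $A$ is formal.
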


\begin{proof}
By the definition of the cone, as an $A$-module, $B$ can be written as $A \oplus A[1-k]$ with differential
\[
d_B = 
\begin{pmatrix}
d & h \\ 
0 & d
\end{pmatrix}
\]
where $d$ is the differential of $A$ (or the differential of $A[1-k]$ which differs by the one of $A$ by $(-1)^{1-k}=-1$).

We write $B = A[\eps]/\eps^2$ where $\eps$ is an element of degree $k-1$, which implicitly defines a graded multiplication 
\[
\begin{split}
(a_1 + \eps a_2 ) \cdot (a'_1 + \eps a'_2) 
=&\ a_1 a'_1 + \eps( (-1)^{(k-1)|a_1|} a_1a'_2 + a_2 a'_1) = \\
=&\ a_1 a'_1 + \eps( (-1)^{|a_1|} a_1a'_2 + a_2 a'_1)
\end{split}
\]
for homogeneous elements $a_i$ of degree $|a_i|$, where we use that $k$ is even.

The differential of $B$ is then
\[
d_B(a_1 + \eps a_2) = d(a_1) + h a_2 + (-1)^{k-1} \eps d(a_2) 
= d(a_1) + h a_2 - \eps d(a_2)
\]
again using that $k$ is even.

We check that with this differential $B$ becomes a dg-algebra, that is, that the Leibniz rule holds.
On the one hand, we have for homogeneous elements
\[
\begin{split}
&d_B((a_1+\eps a_2)(a'_1+\eps a'_2)) = \\
&\ =\ d_B( a_1 a'_1 + \eps( (-1)^{|a_1|} a_1a'_2 + a_2 a'_1) ) = \\
&\ =\ d( a_1 a'_1) + h( (-1)^{|a_1|}a_1a'_2 + a_2a'_1)  - \eps d((-1)^{|a_1|}a_1a'_2 + a_2a'_1) = \\
&\ =\ d(a_1) a'_1 + (-1)^{|a_1|} a_1 d(a'_1) + (-1)^{|a_1|} h a_1a'_2 + h a_2a'_1 - \\
&\ \phantom{=\ } - (-1)^{|a_1|} \eps d(a_1) a'_2 - \eps a_1 d(a'_2) - \eps d(a_2) a'_1 - (-1)^{|a_2|} \eps a_2 d(a'_1) \,.
\end{split}
\]
On the other hand, we have:
\[
\begin{split}
&d_B(a_1 + \eps a_2)(a'_1+\eps a'_2)+ 
 ((-1)^{|a_1|}a_1-(-1)^{|a_2|}\eps a_2)d_B(a'_1+\eps a'_2) = \\
&\ =\ (d(a_1) + h a_2 - \eps d(a_2))(a'_1+\eps a'_2) + \\
&\ \phantom{=\ } + ((-1)^{|a_1|}a_1-(-1)^{|a_2|}\eps a_2)(d(a'_1) + h a'_2 -\eps d(a'_2)) = \\
&\ =\ d(a_1)a'_1 + d(a_1)\eps a'_2 + ha_2a'_1 + h a_2 \eps a'_2 - \eps d(a_2) a'_1 + \\
&\ \phantom{=\ } + (-1)^{|a_1|}a_1 d(a'_1) + (-1)^{|a_1|}a_1 h a'_2 - (-1)^{|a_1|}a_1 \eps d(a'_2) - \\
&\ \phantom{=\ } - (-1)^{|a_2|}\eps a_2 d(a'_1) + (-1)^{|a_2|}\eps a_2 h a'_2
\end{split}
\]
which coincides with the previous computation, using that $h$ is central and the graded commutativity of $\eps$.
\end{proof}

Given a $\IP^n[k]$-object $P_i$,
we denote in the following by $t_i$ a non-zero element of $\End^k(P_i)$.

\begin{proposition}
\label{prop:formal:spherification}
Let $P_1,\ldots,P_m$ be $\IP^n[k]$-objects in some algebraic triangulated category with $k$ even.
Consider the dg-algebra $A = \End^\bullet(\bigoplus_i P_i)$, and suppose that there is a central and closed element $h = h_1 + \cdots + h_m \in A$ such that
\[
H^0(h_i) = t_i \in H^0(A) = \End^*\Bigl(\bigoplus_i P_i\Bigr) \,.
\]
Then there is a spherification functor $\FF \colon  \genby{P_1,\ldots,P_m} \to \cB$.
\end{proposition}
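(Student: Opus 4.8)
The plan is to construct the spherification functor $\FF$ as the functor induced on enhancements by the dg-algebra map $A \to B$, where $B = \Cone(A[-k] \xto{h\cdot} A)$ carries the dg-algebra structure provided by \autoref{lem:formal:h:natural} (applicable since $h$ is central of even degree $k$). Concretely, under the equivalences $\genby{P_1,\ldots,P_m} \cong \cD(A)$ and some suitable $\cB \cong \cD(B)$, the natural projection $A \to B$ of dg-algebras induces a restriction-of-scalars functor, and its left adjoint (extension of scalars, $\blank \otimes_A B$) will play the role of $\FF$, or vice versa depending on which direction makes the dual cotwist come out as $[k]$. First I would set up this dictionary carefully, identifying which of the two adjoint functors between $\cD(A)$ and $\cD(B)$ we call $\FF$ and which we call $\LL$, so that the composite $\LL\FF$ applied to the generators reproduces the cone triangle.

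The heart of the argument is to verify that this $\FF$ is a weak spherification functor for each $P_i$, that is, to identify the dual cotwist $\CC'$ and show $\CC' P_i \cong P_i[k]$ with $\alpha_{P_i} \neq 0$. The key computation is that the triangle $\LL\FF \to \id \to \CC'$, evaluated on the object corresponding to $P_i$, is precisely the defining cone triangle $P_i[-k] \xto{t_i\cdot} P_i \to \CC' P_i$ coming from the construction of $B$ as the cone of multiplication by $h$. Since $H^0(h_i) = t_i$ is the chosen nonzero element of $\End^k(P_i)$, multiplication by $h_i$ acts as multiplication by $t_i$, and because $P_i$ is a $\IP^n[k]$-object we have $\End^*(P_i) \cong \kk[t_i]/t_i^{n+1}$ with $t_i$ in degree $k$; hence the cone of $t_i\cdot \colon P_i[-k] \to P_i$ is $P_i[k]$ (multiplication by $t_i$ is injective on the relevant graded pieces, being an isomorphism wherever degrees permit), giving $\CC' P_i \cong P_i[k]$ and $\alpha_{P_i} \neq 0$. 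This identifies $\FF$ as a weak spherification functor via the definition, or equivalently via \autoref{lem:weak:spherification}.

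To upgrade from weak spherification to genuine spherification, I would then invoke \autoref{lem:spherification}: it suffices to show that $\FF$ is a spherical functor with $\CC' \cong [k]$, for then each $\FF P_i$ is automatically $(nk+k-1)$-Calabi--Yau. The Calabi--Yau/Serre-duality input should come from the structure of $B$ itself; because $B$ is built as a two-term cone over the $\IP^n[k]$-algebra $A$, the resulting object $\FF P_i$ has endomorphism algebra $\kk[s]/s^2$ with $s$ in the spherelike degree $nk+k-1$, and one checks the global dimension/Serre-functor condition on $\cB = \cD(B)$ makes it spherical. Alternatively, one can argue directly that $\CC'$, being given on generators by the uniform shift $[k]$, is an autoequivalence (it is a shift), and that the adjunction morphism $\phi \colon \RR \to \CC\LL[1]$ is an isomorphism, so that the four-condition criterion for spherical functors applies.

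The main obstacle I expect is proving that $\CC'$ is an autoequivalence globally, not merely that $\CC' P_i \cong P_i[k]$ on the generating objects. Knowing the dual cotwist agrees with $[k]$ on each $P_i$ does not immediately give $\CC' \cong [k]$ as functors on the whole category $\genby{P_1,\ldots,P_m}$; one must control the off-diagonal $\Hom^*(P_i,P_j)$ terms and check that the dg-algebra structure on $B$ from \autoref{lem:formal:h:natural}, together with centrality of $h$, forces the cone construction to commute with the morphisms between distinct $P_i$ and $P_j$. This is exactly where centrality of $h$ and the requirement $H^0(h_i) = t_i$ do the real work: they guarantee that multiplication by $h$ is a morphism of $A$-bimodules compatible across all summands, so that the induced functor is an honest dg-functor and $\CC'$ is the uniform shift $[k]$ rather than merely a shift object-by-object. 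Once this globality is secured, applying \autoref{lem:spherification} closes the argument.
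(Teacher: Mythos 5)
Your overall strategy coincides with the paper's: build $B = A[\eps]/\eps^2$ via \autoref{lem:formal:h:natural}, take $\FF = B\otimes_A(\blank)\colon \cD(A)\to\cD(B)$ (extension of scalars along $A\to B$; note the direction is forced, since $\FF$ must start at $\genby{P_1,\ldots,P_m}\cong\cD(A)$), and conclude with \autoref{lem:spherification}. But your central computation is wrong. You claim that the dual cotwist triangle $\LL\FF\to\id\to\CC'$, evaluated at $P_i$, reads $P_i[-k]\xto{t_i}P_i\to\CC'P_i$, and that $\Cone(t_i\colon P_i[-k]\to P_i)\cong P_i[k]$. Both identifications fail. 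The triangle whose first two terms are $P_i[-k]\xto{t_i}P_i$ is the \emph{cotwist} triangle $\CC\to\id\to\RR\FF$ evaluated at $P_i$; its third term is $\RR\FF P_i$, the restriction of scalars of $\FF P_i$, not $\CC'P_i$. And the cone of $t_i\colon P_i[-k]\to P_i$ is far from being $P_i[k]$: applying $\Hom^*(P_i,\blank)$, multiplication by $t_i$ on $\kk[t_i]/t_i^{n+1}$ has one-dimensional kernel and one-dimensional cokernel, so $\Hom^*(P_i,\Cone(t_i))$ is two-dimensional, concentrated in degrees $0$ and $nk+k-1$ --- this cone is precisely (the restriction of) the \emph{spherelike} object, which is the whole point of the construction --- whereas $\Hom^*(P_i,P_i[k])$ is $(n+1)$-dimensional. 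So for $n>1$ your ``isomorphism wherever degrees permit'' argument produces the wrong object, and even the object-wise claim $\CC'P_i\cong P_i[k]$ is never actually established by your argument.

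The repair is the paper's argument, which also disposes of the globality worry in your last paragraph: evaluate the cotwist triangle on the generator $A$ itself. Since $\RR\FF A\cong {}_AB$ and $B$ sits in the triangle $A[-k]\xto{h\cdot}A\to B$, one reads off $\CC A\cong A[-k]$, hence $\CC\cong[-k]$ globally because $A$ generates $\cD(A)$; then $\CC'\cong[k]$ by adjunction ($\CC'$ is the left adjoint of $\CC$), and $\alpha_{P_i}\neq 0$ because $\CC\to\id$ is multiplication by $h$, which acts as $t_i$ on $P_i$. No bimodule argument on the individual $P_i$ is needed. Finally, \autoref{lem:spherification} requires $\FF$ to be a \emph{spherical} functor, and this you leave unverified (``one checks''): the paper proves it by computing $\Hom^\bullet_A(B,A)$ from the semi-free presentation of $B$, obtaining a shift of $B$, whence $\LL$ agrees with $\RR$ up to shift and $\RR\cong\CC\LL[1]$. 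Without these two steps --- the generator computation of the cotwist and the comparison of $\LL$ with $\RR$ --- the proposal does not constitute a proof.
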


\begin{proof}
Note that $\genby{P_1,\ldots,P_m} \cong \cD(A)$.
We can apply \autoref{lem:formal:h:natural} to obtain the following triangle in $\cD(A)$
\begin{equation}\label{eq:B}
A[-k] \xto{h\cdot} A \to B
\end{equation}
where $B = A[\eps]/\eps^2$ can be turned into a dg-algebra with differential 
$d_B(a_1 + \eps a_2) = d(a_1) + h a_2 - \eps d(a_2)$.

The morphism $A \to B$ gives rise to 
\[
\FF = B \otimes_A \blank \colon \cD(A) \to \cD(B) \,. 
\]
Note that, by \eqref{eq:B}, $B$ is semi-free as a dg-module over $A$. Hence, we do not need to replace $B$ by a resolution in order to compute the derived tensor product $\FF$.
Furthermore, $B$ is perfect both as an $A$-module and as a $B$-module. 
Hence, $\FF$ has both adjoints; see the discussion at the beginning of \autoref{sec:spherification:definition}. Note that ${}^BB\cong B$, which implies that the right adjoint $\RR$ can be identified with the restriction functor ${}_A(\blank)$.

We compute first the cotwist of $\FF$ using the triangle
\[
\CC \to \id \to \RR\FF = {}_A(B \otimes_A \blank)  \,.
\]
In the corresponding triangle of $A$-$A$-bimodules $C \to A \to B$, we observe that the map $A\to B$ coincides with the one in \eqref{eq:B}, which is the map defining the $A$-algebra structure of $B$. Therefore we obtain that $C \cong A[-k]$, so $\CC \cong [-k]$.
Moreover, the morphism $\CC \to \id$ is just the multiplication with $h_i$. Dually, this holds also for the natural morphism $\id \to \CC'$.
In particular, by the first part of \autoref{lem:spherification}, $\FF$ is a weak spherification functor for the $P_i$.

Next we check that $\RR \cong \CC\LL[1] \cong \LL[1-k]$.
Again by semi-freeness of $B$ over $A$, the bimodule representing $\LL$ is ${}^AB\cong \Hom^\bullet_A(B,A)$ where the latter are the non-derived Hom-complexes of dg modules over $A$. We can calculate $\Hom^\bullet_A(B,A)$ by applying $\Hom^\bullet_A(\blank,A)$ to \eqref{eq:B}:
\[
\Hom^\bullet_A(A[-k],A) \from \Hom^\bullet_A(A,A) \from \Hom^\bullet_A(B,A) \,.
\]
From this we get that $\Hom^\bullet_A(B,A) \cong B[k-1]$ as $A$-modules.
Using the explicit description of $B = A[\eps]/\eps^2$ with $\deg(\eps)=k-1$, one can check that this holds also as $B$-modules.
Therefore, we have
\[
\LL = \Hom^\bullet_A(B,A) \otimes_B \blank \cong {}_A B \otimes_B (\blank)[k-1] \cong {}_A(\blank)[k-1] \cong \RR[k-1] \,.
\]
Hence $\FF$ is a spherical functor.

In particular, by the second part of \autoref{lem:spherification}, $\FF$ is a spherification functor of $P_1,\ldots,P_m$.
\end{proof}

The following is \autoref{main:spherification} of the introduction.

\begin{corollary}
\label{cor:formal:spherification}
Let $P_1,\ldots,P_m$ be $\IP^n[k]$-objects in some algebraic triangulated category,
such that $k\ge 2$ is even, $\gcd(k,nk/2)>1$ and $\Hom^*(P_i,P_j)$ is concentrated in degree $nk/2$ for $i\neq j$ (or zero).
Then there is a spherification functor $\FF \colon  \genby{P_1,\ldots,P_m} \to \cT$.
\end{corollary}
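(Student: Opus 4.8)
The plan is to deduce \autoref{cor:formal:spherification} from \autoref{prop:formal:spherification} by verifying its single hypothesis: namely, that the dg-algebra $A = \End^\bullet(\bigoplus_i P_i)$ admits a central homogeneous element $h = h_1 + \cdots + h_m$ of degree $k$ whose cohomology class $H^0(h_i)$ equals a chosen generator $t_i \in \End^k(P_i)$. So the entire task reduces to constructing such a central $h$ from the numerical assumptions $k$ even, $\gcd(k, nk/2) > 1$, and $\Hom^*(P_i,P_j)$ concentrated in degree $nk/2$ for $i \neq j$.

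First I would record what the cohomology algebra $H^*(A) = \End^*(\bigoplus_i P_i)$ looks like. On the diagonal blocks we have $\End^*(P_i) \cong \kk[t_i]/t_i^{n+1}$ with $\deg t_i = k$, and the off-diagonal blocks $\Hom^*(P_i,P_j)$ for $i \neq j$ live purely in degree $nk/2$ by hypothesis. The natural candidate is to take each $h_i$ to be a degree-$k$ cocycle in $A$ representing $t_i$; the nontrivial point is centrality, i.e. that $h$ graded-commutes with every element of $A$ up to the dg-structure. For the element $h$ to be literally central in the dg-algebra (not merely central in cohomology), I expect one needs to invoke a formality or minimal-model argument: since $\End^*(P_i)$ is a polynomial-truncation algebra, $t_i$ is central in cohomology, and one wants to lift this to an actual central cochain. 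This is presumably where the previous formality work \cite{Formality} and the degree constraints enter.

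The role of the numerical hypotheses $k$ even and $\gcd(k, nk/2)>1$ is what I would unpack next. The evenness of $k$ is needed directly by \autoref{prop:formal:spherification} and by \autoref{lem:formal:h:natural}, so that $B = A[\eps]/\eps^2$ is a genuine dg-algebra. The degree condition on the off-diagonal homs, together with the gcd condition, should guarantee that the potential obstructions to centrality — which would be classes measuring the failure of $h$ to commute with off-diagonal morphisms, living in specific degrees determined by $k$ and $nk/2$ — must vanish. Concretely, I would check that any commutator $[h, a]$ for $a$ an off-diagonal element of degree $nk/2$ lands in a degree that is either out of the range where $\Hom^*(P_i,P_j)$ is nonzero, or forced to vanish because $\gcd(k,nk/2)>1$ rules out the relevant degree from appearing. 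The aim is to arrange that $h$ can be chosen central on the nose, after possibly adjusting the cochain representative within its cohomology class.

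The main obstacle I anticipate is precisely the passage from ``central in cohomology'' to ``central as a dg-element,'' since centrality is not a cohomological condition and can be obstructed at the cochain level. I would address this by appealing to the existence of a suitable (e.g.\ minimal or formal) dg-model for $A$ furnished by \cite{Formality}, in which the graded-commutative structure of $H^*(A)$ is realized strictly enough that a central lift of $h$ exists; the two numerical conditions are exactly the hypotheses under which that formality input applies and the degree arithmetic closes up. Once $h$ is produced, the conclusion is immediate: \autoref{prop:formal:spherification} yields the desired spherification functor $\FF \colon \genby{P_1,\ldots,P_m} \to \cB$, completing the proof.
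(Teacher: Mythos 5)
Your proposal is correct and follows essentially the same route as the paper: the paper's proof invokes intrinsic formality of $A=\End^*(\bigoplus_i P_i)$ (by adapting the proof of \cite[Prop.\ 4.3]{Formality}, which is exactly where $\gcd(k,nk/2)>1$ and the degree concentration are used) to replace the dg-algebra by its cohomology with trivial differential, and then observes that $h=t_1+\cdots+t_m$ is central there because any product of $h$ with an off-diagonal element would land in degree $nk/2+k$, where $\Hom^*(P_i,P_j)$ vanishes, before applying \autoref{prop:formal:spherification}. The only slight imprecision in your sketch is the suggestion that $\gcd(k,nk/2)>1$ might enter the commutator computation: it is needed solely for the formality step, while the centrality check is just the trivial degree argument.
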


\begin{proof}
By slightly adapting the proof of \cite[Prop. 4.3]{Formality},
we get that $A = \End^*(\bigoplus_i P_i)$ is intrinsically formal.

Moreover, $h = t_1+\cdots+t_m$ is a central element
(note that, for $i\neq j$, the product of $h$ with elements in $\Hom^*(P_i,P_j)$ is zero for degree reasons).
So we can apply \autoref{prop:formal:spherification} to the dg-algebra $A$ with trivial differential, in order to conclude the proof.
\end{proof}

\begin{remark}
\label{rem:single:pnk}
In particular, we have that there is always a spherification functor for a single $\IP^n[k]$-object $P$ such that $k$ is even.
\end{remark}

\begin{remark}
Let $P_1$ and $P_2$ be two $\IP^n[k]$-objects, such that $k$ even and $\gcd(k,nk/2)>1$ (for example, if $n$ is even). In order to obtain a spherification functor for $P_1$ and $P_2$, it is enough to assume additionally that $\Hom^*(P_1,P_2)$ is concentrated in a single degree.
To see this, replace one of the two objects by a suitable shift, so we can ensure that $\Hom^*(P_1,P_2)$  (and by Serre duality $\Hom^*(P_2,P_1)$) is concentrated in degree $nk/2$.
\end{remark}

The following example starts from results taken from \cite{PS}, see also \cite[\S 6]{Formality}.

\begin{example}
\label{ex:hilb}
Let $Y$ be some smooth projective variety of even dimension $k$ and $S \in \cD(Y)$ a spherical object.

Then for $n>0$ we can consider the equivariant derived category $\cD_{\sym_n}(Y^n)$, where $S$ gives rise to two $\IP^n[k]$-objects $S^{+\{n\}}$ and $S^{-\{n\}}$ by linearising $S^{\boxtimes n}$ with the trivial or the sign representation of $\sym_n$, respectively.

If there are two spherical objects $S_1,S_2$ with $\Hom^*(S_1,S_2)$ in a single degree $d$, then we have that $\Hom^*\bigl(S_1^{\pm\{n\}},S_2^{\pm\{n\}}\bigr)$ is again concentrated in a single degree, namely $nd$.
In particular, there is a spherification functor for any of the pairs of $\IP$-objects of the form $\bigl(S_1^{\pm\{n\}},S_2^{\pm\{n\}}\bigr)$ by \autoref{cor:formal:spherification}.
\end{example}

In the situation of $\IP^n$-objects on hyperk\"ahler varieties,
we can often spherify using $j_* \colon \cD(X) \to \cD(\cX)$ of \autoref{prop:geometric:spherification}, or we can use the more abstract spherification of \autoref{cor:formal:spherification}.
There is a non-empty intersection, as we see in the following example.

\begin{example}
We specialise \autoref{ex:hilb}.
Recall that in case of $Y$ a surface, we get $\Phi \colon \cD_{\sym_n}(Y^n) \xto{\sim} \cD(\Hilb^n(Y))$.
In particular, if $Y$ is a K3 surface, $X = \Hilb^n(Y)$ is a hyperk\"ahler variety.
Note that if $S$ is a (non-trivial) line bundle on a K3 surface $Y$, then so is $\Phi\bigl(S^{\pm\{n\}}\bigr)$ on $X$, hence can be spherified using $j_* \colon \cD(X) \to \cD(\cX)$ or abstractly by \autoref{cor:formal:spherification}.

We noted in \autoref{ex:geometric} that $j_* \colon \cD(X) \to \cD(\cX)$ is not a spherification functor for $\cO_X = \Phi\bigl(\cO_Y^{+\{n\}}\bigr)$.
This problem can be circumvented easily: just precompose $j_*$ with a suitable autoequivalence, for example, tensoring with a non-trivial line bundle.
But by \autoref{rem:single:pnk}, we can also spherify $\cO_X$ directly, using the abstract spherification of \autoref{cor:formal:spherification}.

If $S = \cO_{C}$ for some rational curve on a K3 surface $Y$, then $S^{+\{n\}}$ becomes isomorphic to $\cO_{\IP^n}$ under the equivalence $\Phi^{-1}$, where $\IP^n \cong \Hilb^n(C) \into X$, see \cite[Prop. 6.6]{Formality}.
Hence, up to this equivalence, it can again be spherified either way.
But $S^{-\{n\}}$ will yield a more complicated object in $\cD(\Hilb^n(Y))$, see \cite[Prop. 6.7]{Formality} for $n=2$. For that reason it is not immediately clear, whether spherification using $j_*$ is possible. Still, we can spherify abstractly by \autoref{cor:formal:spherification}.
\end{example}

\begin{example}
\label{rem:negativePk}
As far as we know, there are no examples of $\IP^n[k]$-objects with $k<0$ in the literature.
But there are $k$-spherical objects with $k<0$ studied in representation theory. For example, \cite{Coelho} treats the triangulated category $\cT$ generated by a single $k$-spherical object.
Considering the symmetric power of this category $\sym^n \cT$ as introduced in \cite{GKsym},
one obtains two $\IP^n[k]$-objects there (like in the geometric situation of \autoref{ex:hilb}).
Note that \autoref{cor:formal:spherification} assumes $k\ge 2$, hence does not yield a spherification functor for these negative $\IP$-objects.
\end{example}

\subsection{Properties of spherification functors}

In the following, we write $\hom^*(A,B)$ for the dimension of the graded $\kk$-vector space $\Hom^*(A,B)$.

\begin{proposition}
\label{prop:pushforward}
Let $P_1,P_2$ be two $\IP^n[k]$-objects in some algebraic triangulated category $\cS$.
Let $\FF \colon \cS \to \cT$ be a weak spherification functor for $P_1$ and $P_2$.
\begin{enumerate}[label = (\roman*)]
\item \label{prop:pushforward:i} If $\hom^*(P_1,P_2) \geq 1$ then $\hom^*(\FF P_1,\FF P_2) \geq 2$.
\item \label{prop:pushforward:ii} $\FF P_1\cong \FF P_2$ implies $P_1\cong P_2$.
\end{enumerate}
\end{proposition}

\begin{proof}
Apply $\Hom^*(\blank,P_2)$ to the triangle $\LL\FF P_1 \to P_1 \to P_1[k]$ to get
\begin{equation}\label{eq:Homtriangle}
\Hom^*(\FF P_1,\FF P_2) \xleftarrow{\FF} \Hom^*(P_1,P_2) \xleftarrow{\alpha_{P_1}^*} \Hom^*(P_1,P_2)[-k] \,.
\end{equation}
Let us first prove \ref{prop:pushforward:i} in the case $k>0$. By \eqref{eq:Homtriangle}, we see that there is an injection of the minimal degree part of $\Hom^*(P_1,P_2)$ into $\Hom^*(\FF P_1,\FF P_2)$, and a surjection from $\Hom^*(\FF P_1,\FF P_2)$ to the maximal degree part of $\Hom^*(P_1,P_2)[-k]$. This gives the claim.

For $k<0$, we instead have an injection of the maximal degree part of $\Hom^*(P_1,P_2)$ into $\Hom^*(\FF P_1,\FF P_2)$, and a surjection from $\Hom^*(\FF P_1,\FF P_2)$ to the minimal degree part of $\Hom^*(P_1,P_2)[-k]$.

For $k=0$, we have that $\alpha_{P_i}\colon P_i\to P_i$ is a nilpotent endomorphism of degree zero for $i=1,2$, as $\Hom^*(P_i,P_i) \cong \kk[\alpha_{P_i}]/\alpha_{P_i}^{n+1}$; see \autoref{rem:alphagen}.
Hence, in the long exact cohomology sequence associated to \eqref{eq:Homtriangle}, the morphisms $\alpha_{P_i}^*\colon \Hom^j(P_1,P_2)\to \Hom^j(P_1,P_2)$ are nilpotent endomorphisms. In particular, they cannot be isomorphisms for any $j\in \IZ$. From this follows \ref{prop:pushforward:i} for the last remaining case, $k=0$.

We now prove \ref{prop:pushforward:ii}.
Since $\FF P_1\cong \FF P_2$, we have
\begin{equation}\label{eq:Hom12}
 \Hom^*(\FF P_1, \FF P_2)\cong  \Hom^*(\FF P_i, \FF P_i) \cong \kk \oplus \kk[-nk-k+1]
\end{equation}
where the last isomorphism is due to \autoref{lem:weak:spherification}.

Let us first assume that $k\neq 0$.
Reinspecting the arguments above involving \eqref{eq:Homtriangle} shows that
\begin{align*}
\mindeg \Hom^*(P_1,P_2)= \mindeg \Hom^*(\FF P_1,\FF P_2)\quad \text{for $k>0$,}\\
\maxdeg \Hom^*(P_1,P_2)= \maxdeg \Hom^*(\FF P_1,\FF P_2)\quad \text{for $k<0$,}
\end{align*}
where $\mindeg$ and $\maxdeg$ denote the minimal and maximal degrees, respectively, in which the graded vector spaces are non-zero.
Furthermore, we have $\mindeg \Hom^*(\FF P_1,\FF P_2)=0$ for $k>0$ and
$\maxdeg \Hom^*(\FF P_1,\FF P_2)=0$ for $k<0$; see \eqref{eq:Hom12}. Hence, in any case, we get an isomorphism of one-dimensional vector spaces $\FF\colon \Hom(P_1,P_2)\xto{\sim} \Hom(\FF P_1,\FF P_2)$ in degree $0$. In complete analogy, we have an isomorphism $\FF\colon \Hom(P_2,P_1)\xto{\sim} \Hom(\FF P_2,\FF P_1)$. Furthermore, inspecting the proof of \autoref{lem:weak:spherification} shows that there are isomorphisms $\FF\colon \Hom(P_i,P_i)\xto{\sim} \Hom(\FF P_i,\FF P_i)$ for $i=1,2$. Now, let $0\neq \phi\colon P_1\to P_2$ be some morphism. Then $\FF(\phi)\neq 0$, hence it is an isomorphism. Let $\psi\in \Hom(P_2,P_1)$ be the unique preimage of $\FF(\phi)^{-1}$ under $\FF\colon \Hom(P_2,P_1)\xto{\sim} \Hom(\FF P_2,\FF P_1)$. Then
\[
 \FF(\psi\circ \phi)=\FF(\psi)\circ \FF(\phi)=\id_{\FF P_1}\,.
\]
By the injectivity of $\FF\colon \Hom(P_1,P_1)\to \Hom(\FF P_1,\FF P_1)$, it follows that $\psi\circ \phi=\id_{P_1}$. The same way, we obtain  $\phi\circ \psi=\id_{P_2}$. Hence, $\phi$ and $\psi$ are mutually inverse isomorphisms.

Recall that, for $k=0$, the morphisms $\alpha_{P_i}^*\colon \Hom^j(P_1,P_2)\to \Hom^j(P_1,P_2)$ occuring in the long exact sequence associated to \eqref{eq:Homtriangle} cannot be isomorphisms for any $j\in \IZ$. Furthermore, $\Hom^*(\FF P_1, \FF P_2) \cong \kk \oplus \kk[1]$ by \eqref{eq:Hom12}. It follows that $\Hom^*(P_1,P_2)$ is concentrated in degree $0$ and $\FF\colon \Hom(P_1,P_2)\to \Hom(\FF P_1,\FF P_2)$ is surjective. Analogously, also $\FF\colon \Hom(P_2,P_1)\to \Hom(\FF P_2,\FF P_1)$ is surjective. Similarly to the case $k\neq 0$, we now pick some $\phi\in \Hom(P_1,P_2)$ such that $\FF(\phi)$ is an isomorphism, and some $\psi\in \Hom(P_2,P_1)$ with $\FF(\psi)=\FF(\phi)^{-1}$. Then $\FF(\psi\circ \phi)=\id_{\FF_{P_1}}$. Inspecting the proof of
\autoref{lem:weak:spherification}, we see that the kernel of
\[
 \FF\colon \Hom(P_1,P_1)\to \Hom(\FF P_1,\FF P_1)
\]
is, under the isomorphism $\Hom(P_1,P_1)\cong \kk[\alpha_{P_1}]/\alpha_{P_1}^{n+1}$, the maximal ideal $(\alpha_{P_1})$. It follows that $\psi\circ \phi$, not being contained in this kernel, is an automorphism of $P_1$. Analogously, $\phi\circ \psi$ is an automorphism of $P_2$. Hence, $\phi$ and $\psi$ are isomorphisms.
\end{proof}

\begin{example}
\label{ex:pushforward}
Let $\FF \colon \cS \to \cT$ be a weak spherification functor for two $\IP^n[k]$-objects $P_1$ and $P_2$ with $k\neq0$.
If $\Hom^*(P_1,P_2) \cong \kk^m[d]$, then we have $\Hom^*(\FF P_1, \FF P_2) \cong \kk^m[d] \oplus \kk^m[d-k+1]$.

To see this, we look at the triangle of the proof of \autoref{prop:pushforward}:
\[
\Hom^*(\FF P_1, \FF P_2) \from \Hom^*(P_1,P_2) \from \underbrace{\Hom^*(P_1,P_2[-k])}_{\mathclap{\cong \Hom^*(P_1,P_2)[-k] \cong \kk^m[d-k]}} \,.
\]
As there is no cancellation possible between the middle and the right hand term due to $k\neq0$, we arrive at $\Hom^*(\FF P_1, \FF P_2) \cong \kk[d] \oplus \kk[d-k+1]$.
\end{example}

The following statement generalises \cite[Prop. 4.7]{HT06} to arbitrary spherification functors.

\begin{proposition}
\label{prop:commutativity}
Let $P$ be a $\IP^n[k]$-object with $k\neq0,1$ in some algebraic triangulated category $\cS$.
Let $\FF \colon \cS \to \cT$ be a weak spherification functor for $P$.
Then the following diagram commutes:
\[
\begin{tikzcd}
\cS \ar[r, "\FF"] \ar[d, "\PPP_P"'] & \cT \ar[d, "\TTT_{\FF P}"] \\
\cS \ar[r, "\FF"'] & \cT
\end{tikzcd}
\]
\end{proposition}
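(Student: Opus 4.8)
The plan is to apply $\FF$ to the two triangles that build the $\IP$-twist in \autoref{prop:HTtwist} and to show that, after pushing forward, the two-step cone defining $\PPP_P$ collapses to the single cone defining the spherical twist $\TTT_{\FF P}$. Recall that $\PPP_P\blank$ is the cone of a map $\Cone_H(\blank)\to\blank$, where $\Cone_H(\blank)$ is itself the cone of
\[
H = t\otimes\id-\id\otimes t\colon \Hom^*(P,\blank)\otimes P[-k]\to\Hom^*(P,\blank)\otimes P.
\]
Since $\FF$ is $\kk$-linear and lifts to the enhancements, it commutes with these functorial cones and carries the coefficient spaces $\Hom^*(P,\blank)$ along unchanged (i.e.\ $\FF(V\otimes P)=V\otimes\FF P$). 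So first I would reduce the problem to understanding $\FF H$ and the induced map $\FF\Cone_H\to\FF$.

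The crucial observation is that the object part of $H$ dies under $\FF$. Indeed $t\in\End^k(P)$, while by \autoref{lem:weak:spherification} the object $\FF P$ is $(nk+k-1)$-spherelike, so $\End^*(\FF P)$ is concentrated in degrees $0$ and $nk+k-1$. As $n>1$ we have $0<k<nk+k-1$, hence $\End^k(\FF P)=0$ and therefore $\FF t=0$. Consequently $\FF(\id\otimes t)=\id\otimes\FF t=0$, and $\FF H$ reduces to the first term acting on the coefficient space,
\[
\FF H = t\otimes\id_{\FF P}\colon \Hom^*(P,\blank)\otimes\FF P[-k]\to\Hom^*(P,\blank)\otimes\FF P,
\]
where $t$ now denotes precomposition with $\alpha_P$ on $\Hom^*(P,\blank)$. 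On the other hand, applying $\Hom^*(\blank,\blank)$ to the triangle $\LL\FF P\to P\to P[k]$ exactly as in the proof of \autoref{prop:pushforward} (but with a general object in the second slot) exhibits $\Hom^*(\FF P,\FF\blank)$ as the cone of this same map $t\colon\Hom^*(P,\blank)[-k]\to\Hom^*(P,\blank)$. Tensoring that triangle with $\FF P$ and comparing it with the image under $\FF$ of the triangle defining $\Cone_H$, both triangles have the same first morphism and the same target, so uniqueness of cones yields a natural isomorphism
\[
\FF\Cone_H(\blank)\;\cong\;\Hom^*(\FF P,\FF\blank)\otimes\FF P,
\]
under which the canonical map $\Hom^*(P,\blank)\otimes\FF P\to\FF\Cone_H$ corresponds to $\FF\otimes\id_{\FF P}$.

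It then remains to identify the map down to $\FF$. The map $\Cone_H\to\id$ is the factorisation of the evaluation $\Hom^*(P,\blank)\otimes P\to\id$ through $\Cone_H$; applying $\FF$ and using the functoriality of evaluation, namely $\FF\circ\mathrm{ev}_P=\mathrm{ev}_{\FF P}\circ(\FF\otimes\id)$, the induced map $\FF\Cone_H\to\FF$ becomes, under the isomorphism above, precisely $\mathrm{ev}_{\FF P}\colon\Hom^*(\FF P,\FF\blank)\otimes\FF P\to\FF$. Taking cones of these identified maps gives $\FF\PPP_P\cong\TTT_{\FF P}\FF$, which is the commutativity of the square. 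The degree count is clean; the hard part will be making everything functorial in the object and compatible at the level of morphisms, that is, carrying out the cone constructions in the dg-enhancements so that the isomorphism $\FF\Cone_H\cong\Hom^*(\FF P,\FF\blank)\otimes\FF P$ and the matching of the two maps to $\FF$ are genuine natural transformations rather than objectwise isomorphisms. I expect verifying this compatibility of the evaluation (and counit) maps to be the principal obstacle.
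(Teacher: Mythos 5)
Your proposal is correct and follows essentially the same route as the paper's proof: apply $\FF$ to the defining triangles of $\PPP_P$, observe that $\FF(t)=0$ for degree reasons (so $\FF H = t\otimes\id$), identify $\FF\Cone_H(\blank)\cong\Hom^*(\FF P,\FF\blank)\otimes\FF P$ via the dual cotwist triangle $\LL\FF P\to P\xto{t}P[k]$, and match the resulting triangle with the one defining $\TTT_{\FF P}\FF$. Your explicit justification of the ``degree reasons'' (that $\End^k(\FF P)=0$ since $\FF P$ is $(nk+k-1)$-spherelike and $n>1$) is exactly what the paper leaves implicit.
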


\begin{proof}
First recall that $\TT = \TTT_{\FF P}$ fits into the triangle
\[
\FF P\otimes \Hom^*(\FF P,\blank) \xto{\mathrm{ev}_{\FF P}} \id \to \TT \,.
\]
Precomposing with $\FF$ we obtain
\begin{equation}
\label{eq:Tj}
 \FF P\otimes \Hom^*(\FF P,\FF\blank) \xto{\mathrm{ev}_{\FF P}\FF} \FF \to \TT \FF.
\end{equation}

On the other hand, apply $\FF$ to the triangles defining the $\IP$-twist $\PPP_P$ to get
\begin{equation}
\label{eq:jP}
\begin{tikzcd}[row sep=0.5ex,column sep={6em,between origins}]
\FF P[-k]\otimes \Hom^*(P,\blank)
\ar[rd, "\FF H"] \\
& \FF P\otimes \Hom^*(P,\blank)
\ar[rd] \ar[dd, "{\FF}\mathrm{ev}_P"]\\
&& \FF \Cone_H(\blank) \ar[ld, dashed]\\
& \FF \ar[ld] \\
\hphantom{\Hom^*(P,\blank)\otimes} \FF \PPP_P
\end{tikzcd}
\end{equation}
where we have $\FF H = \FF(\id \otimes t - t \otimes \id) = \id \otimes t$, as the second term cancels for degree reasons (we recall that we always assume that $n>1$). Here, $t$ is a generator of $\Hom^*(P,P)$ in degree $k$. By \autoref{rem:alphagen}, we can assume that $t=\alpha_P$.
Hence, $t$ fits also in the triangle of the dual cotwist for $P$:
\begin{equation}\label{eq:LFtriangle}
\LL \FF P \xto{\eps} P \xto{t} P[k]
\end{equation}
from which we get
\[
\begin{tikzcd}
\FF P[-k]\otimes \Hom^*(P,\blank) \ar[r, "\id \otimes t"] & \FF P\otimes \Hom^*(P,\blank) \ar[r, "\id \otimes \eps^*"] \ar{dr}{c} & \FF P\otimes \Hom^*(\LL \FF P,\blank) \ar{d}{\id\otimes \mathrm{adj}}[swap]{\cong} \\
&& \FF P\otimes\Hom^*(\FF P,\FF\blank)
\end{tikzcd}
\]
Let $\eta \colon \id \to \FF\LL$ be the unit of the adjunction, so we can write the adjunction isomorphism as 
\[
\mathrm{adj}=(\eta\FF)^* \circ \FF \colon \Hom^*(\LL \FF P,\blank) \xto{\sim} \Hom^*(\FF P,\FF\blank)
\]
Then we get
\[
\mathrm{adj} \circ \eps^* = (\eta\FF)^* \circ \FF \circ \eps^* = (\eta\FF)^* \circ (\FF\eps)^* \circ \FF = ( \FF\eps \circ \eta \FF)^* \circ \FF = \FF
\]
In particular, the composition $c$ in the above diagram is $c=\id \otimes \FF$, and the upper triangle in \eqref{eq:jP} is isomorphic to the triangle
\[
\FF P[-k]\otimes \Hom^*(P,\blank) \xto{t \otimes \id} \FF P\otimes \Hom^*(P,\blank) \xto{\FF \otimes \id} \Hom^*(\FF P,\FF\blank)\otimes \FF P
\]
Hence the right hand side of \eqref{eq:jP} has become
\[
\begin{tikzcd}[row sep=0.5ex,column sep={7em,between origins}]
\FF P\otimes \Hom^*(P,\blank)
\ar[rd, "\id\otimes \FF"] \ar[dd, "{\FF}\mathrm{ev}_P"]\\
&\FF P\otimes \Hom^*(\FF P,\FF\blank)  \ar[ld, dashed] \\
\FF 
\end{tikzcd}
\]

Note that one possible choice for a dashed arrow making the diagram commute is $\mathrm{ev}_{\FF P}\FF$, which is exactly the morphism whose cone is $\TT \FF$; see \eqref{eq:Tj}.
Recall \autoref{rem:bimodtwist}, which tells us that both diagrams, \eqref{eq:Tj} and \eqref{eq:jP}, defining the functors that we want to compare, are actually induced by diagrams of $\cB$-$\cA$-bimodules. Hence, 
to conclude $\FF\PP_P \cong \TT_{\FF P} \FF$, it suffices to show that the dashed morphism in the diagram corresponding to \eqref{eq:jP} in the derived category of bimodules is unique. Concretely, let $\FF\cong M\otimes\blank$, in particular $\FF P\cong M\otimes P$. Then, tensoring \eqref{eq:Ptwistbimod} on the left with $M$ gives the diagram
\[
\begin{tikzcd}[row sep=0.5ex,column sep={6em,between origins}]
\FF P \otimes {}^\cA P [-k]
\ar[rd] \\
& \FF P \otimes {}^\cA P
\ar[rd] \ar[dd]\\
&& M \otimes \Cone_H \ar[ld, dashed, "\phi"]\\
& M \ar[ld] \\
M\otimes X 
\end{tikzcd}
\]
in the derived category of $\cB$-$\cA$-bimodules where $M\otimes X$ represents the functor $\FF \PP_P$. 
By the long exact $\Hom(\blank,M)$-sequence associated to the upper exact triangle, for the 
uniqueness of the arrow $\phi$ making the triangle commute, it suffices to check the vanishing
\[
\Hom(\FF P \otimes {}^\cA P [1-k], M) = 0.
\]

By adjunction on the level of bimodules (see \cite[\S 2.2]{ALdg}) we have that
\[
\Hom_{\cD(\cB-\cA)}(\FF P \otimes {}^\cA P [1-k], M) \cong \Hom^{k-1}_{\cD(\cA-\cA)}(\LL \FF P \otimes {}^\cA P, \cA)
\]
where $\cA$ denotes the diagonal bimodule.
Applying 
$\Hom^*(\blank \otimes {}^\cA P,\cA)$ 
to the triangle \eqref{eq:LFtriangle} gives 
\[
\Hom^*(P \otimes {}^\cA P[k], \cA)
\to
\Hom^*(P \otimes {}^\cA P, \cA)
\to
\Hom^*(\LL \FF  P \otimes {}^\cA P, \cA)\,.
\]
Due to adjunction, the first two terms of this triangle are isomorphic to $\Hom^*(P,P)[-k]$ and $\Hom^*(P,P)$, respectively. Hence, the relevant part of the long exact sequence associated to this triangle is
\[
\begin{tikzcd}[column sep=2ex]
\Hom^{-1}(P , P) \ar[r]
& \Hom^{k-1}(P , P) \ar[r]  \ar[draw=none]{d}[name=Z, anchor=center]{}
& \Hom^{k-1}(\LL \FF P \otimes {}^\cA P,\cA)
\ar[rounded corners,
            to path={ -- ([xshift=2ex]\tikztostart.east)
                      |- (Z.center) \tikztonodes
                      -| ([xshift=-2ex]\tikztotarget.west)
                      -- (\tikztotarget)}]{dll}[at end]{} \\[-4ex]
\Hom^{0}(P , P) \ar[r] & \Hom^{k}(P ,P)
\end{tikzcd}
\]
where the first and the last arrow are precomposition with $t\colon P\to P[k]$. Hence, the last arrow $\Hom^{0}(P , P) \to \Hom^{k}(P ,P)$ is an isomorphism of one-dimensional vector spaces, by the assumption $k\neq 0$. For $k\notin \{-1,1\}$, the term $\Hom^{k-1}(P , P)$ vanishes. For $k=-1$, the multiplication by $t$ map $\Hom^{-1}(P , P) \to \Hom^{k-1}(P ,P)$ is an isomorphism of one-dimensional vector spaces. In summary, we proved the desired vanishing $\Hom^{k-1}(\LL \FF  P \otimes {}^\cA P, \cA)=0$ for all $k\neq 0,1$.
\end{proof}

\begin{remark}\label{rem:k}
For $k=0,1$ the last step of the argument in the proof of \autoref{prop:formal:spherification} breaks. 
Still, we believe that the statement holds also for $k=0$, which would then also give \autoref{thm:main} for $k=0$. In the case of $k=1$, we are a bit more sceptical due to \autoref{rem:11}. 

The lift $\phi$ is not unique anymore for $k=0,1$. However, it might be possible to prove by using methods similar to those of \cite{ALunique}, that its cone is unique, which is a convolution of the three-term complex
\[
\FF P \otimes {}^\cA P [-k]\to \FF P \otimes {}^\cA P \to M\,.
\]
Note that we cannot directly infer uniqueness of the convolution, as \emph{loc.\ cit}. is about the convolution of the three-term complex before applying $\FF$.
\end{remark}

\begin{remark}
\label{rem:inverse}
If $\FF \colon \cS \to \cT$ is a spherification functor for $P$, then we get also
\[
\TTinv_{\FF P} \FF \cong \FF \PPinv_P
\]
by precomposing $\FF \PPP_P \cong \TTT_{\FF P} \FF$ with $\PPinv_P$ and postcomposing it with $\TTinv_{\FF P}$.
\end{remark}

\section{Returning from spherical objects to $\IP$-objects}

The following proposition is part of a far more general statement about spherical twists associated to \emph{spherical sequences}.
To our knowledge, spherical sequences were first defined and studied in \cite{Efimov}, but they were rediscovered independently as \emph{exceptional cycles} in \cite{BPP}. Anyway, we will stick to the special case of spherical objects.

\begin{proposition}[{\cite[Thm. 2.7]{Volkov}}]
\label{prop:spherical:norels}
Let $S_1, S_2$ be two $k$-spherical objects in some algebraic triangulated category which are not isomorphic up to shift.
If $\hom^*(S_1,S_2) \geq2$, then there are no relations among the associated spherical twists.
\end{proposition}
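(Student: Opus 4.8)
The plan is to show the equivalent statement that the homomorphism from the free group $F_2$ on two generators $g_1,g_2$ into $\operatorname{Aut}(\cT)$ given by $g_i\mapsto\TTT_{S_i}$ is injective. I would first restrict to the thick subcategory $\genby{S_1,S_2}$: both twists preserve it and restrict to autoequivalences, and any relation in $\operatorname{Aut}(\cT)$ restricts to a relation in $\operatorname{Aut}(\genby{S_1,S_2})$, so it is enough to prove injectivity there. Since the $S_i$ are $k$-Calabi--Yau, Serre duality gives $\hom^*(S_1,S_2)=\hom^*(S_2,S_1)=:a$, and by hypothesis $a\geq2$. The tool for detecting freeness is the ping-pong lemma, and the quantity I would ping-pong with is the graded dimension vector $\delta(E)=\bigl(\hom^*(S_1,E),\hom^*(S_2,E)\bigr)\in\IZ_{\geq0}^2$.

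Next I would determine the transformation rule for $\delta$. As total graded dimension is invariant under shifts and $\TTT_{S_i}^{\mp1}S_i\cong S_i[\pm(k-1)]$, the autoequivalence $\TTT_{S_i}^{\pm1}$ leaves the $i$-th coordinate of $\delta$ unchanged. For the other coordinate I would apply $\Hom^*(S_j,\blank)$ to the triangle $\Hom^*(S_i,E)\otimes S_i\to E\to\TTT_{S_i}E$ (and to its analogue for the inverse); the resulting long exact sequence has outer terms of total dimension $\hom^*(S_j,E)$ and $a\cdot\hom^*(S_i,E)$. Absent cancellation, this yields the recursions $(x,y)\mapsto(x,y+ax)$ for $\TTT_{S_1}^{\pm1}$ and $(x,y)\mapsto(x+ay,y)$ for $\TTT_{S_2}^{\pm1}$. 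The two-step matrix $\left(\begin{smallmatrix}1&a\\a&a^2+1\end{smallmatrix}\right)$ has determinant $1$ and trace $a^2+2\geq6$, hence a real eigenvalue strictly greater than $1$; consequently each nonzero power of $\TTT_{S_1}$ maps the cone $\{x>y\}$ into $\{y>x\}$ and each nonzero power of $\TTT_{S_2}$ maps $\{y>x\}$ into $\{x>y\}$. The ping-pong lemma then shows that no nontrivial reduced word can be the identity, since it strictly enlarges the dimension vector of a suitable test object.

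The main obstacle is precisely the phrase ``absent cancellation'': a priori the long exact sequences only bound $\delta$ from above, and the signed count is worse still---on the Grothendieck group the two-step reflection is at best parabolic when $a=2$, so freeness is invisible to the Euler form and can only be read off the unsigned graded dimensions. The crux is therefore to prove that the connecting maps vanish, so that dimensions add exactly and the geometric growth above is genuine. I would establish this by induction along the word, maintaining a positivity invariant of the objects in the orbit: working in a bounded $t$-structure on $\genby{S_1,S_2}\cong\cD\bigl(\End^\bullet(S_1\oplus S_2)\bigr)$, the objects produced by the twists are filtered by shifts of $S_1$ and $S_2$ whose degrees do not overlap, which forces the relevant maps to vanish and the multiplicities to add. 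Controlling this positivity---above all in the borderline case $a=2$, where the numerical shadow is only unipotent---is the step I expect to be genuinely delicate; once it is in place the ping-pong argument closes and yields the absence of relations.
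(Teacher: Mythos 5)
Your ping-pong skeleton is indeed the right frame --- it is exactly the mechanism of Volkov's proof, which the paper does not reprove but recalls inside the proof of \autoref{thm:main} --- but the step you yourself single out as the crux fails, and it cannot be repaired by any positivity or $t$-structure argument, because exact additivity of graded dimensions is simply false. There is a structural contradiction built into your claim: you assert the recursion $(x,y)\mapsto(x,y+ax)$ for \emph{both} $\TTT_1$ and $\TTT_1^{-1}$ (you say the analogous triangle for the inverse gives the same rule); composing, the identity $\TTT_1^{-1}\TTT_1=\id$ would act by $(x,y)\mapsto(x,y+2ax)$, absurd whenever $x\neq 0$. And the failure is not a subtle borderline phenomenon: it occurs at the very first twist. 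Take $X$ a K3 surface, $S_1=\cO_X$, and $S_2=L$ a globally generated line bundle with $H^1(L)=H^2(L)=0$ and $a:=h^0(L)=\hom^*(S_1,S_2)\geq 2$. Then $\TTT_1S_2\cong M_L[1]$, where $M_L$ is the kernel of the evaluation surjection $H^0(L)\otimes\cO_X\to L$, and a direct computation gives
\[
\hom^*(S_2,\TTT_1S_2)=a^2\,,\qquad\text{not } a^2+2
\]
as your recursion predicts: the connecting map is the composition $\Hom^*(S_2,S_1)\otimes\Hom^*(S_1,S_2)\to\End^*(S_2)$, which is nonzero because the Serre-duality pairing is nondegenerate (it hits the socle of $\End^*(S_2)$), so cancellation of total dimension $2$ really happens. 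Concretely, for $L$ an elliptic pencil one has $a=2$, $\TTT_1S_2\cong L^{-1}[1]$ and $\hom^*(L,L^{-1}[1])=4\neq 6$. Hence no invariant forcing "multiplicities to add exactly" can be maintained, even for one step.

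What the actual proof does instead is to work with one-sided estimates that survive cancellation. Inside the orbit $\cO_\TT=\genby{\TTT_1,\TTT_2}\cdot\{S_1,S_2\}$ one takes not your cones $\{x>y\}$, $\{y>x\}$ but the weighted sets
\[
\cX=\bigl\{X \mid \hom^*(S_2,X)>\tfrac{a}{2}\,\hom^*(S_1,X)\bigr\}\,,\qquad
\cX'=\bigl\{X \mid \hom^*(S_1,X)>\tfrac{a}{2}\,\hom^*(S_2,X)\bigr\}\,,
\]
which are disjoint precisely because $a\geq 2$, and the substance of the proof consists of the inequalities giving $\TT_{\!1}^m\cdot\cX'\subseteq\cX$ and $\TT_{\!2}^m\cdot\cX\subseteq\cX'$ for all $m\neq 0$; the factor $a/2$ is what makes these estimates propagate through arbitrary powers, with no claim of equality anywhere. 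Note that in the example above the true value $\bigl(a,a^2\bigr)$ of your dimension vector for $\TTT_1S_2$ still lies in $\cX$ (since $a^2>a^2/2$), even though your predicted value $\bigl(a,a^2+2\bigr)$ is wrong --- so it is only the exactness claim, and with it your inductive mechanism, that collapses. To close the argument you must replace "dimensions add exactly" by these weighted lower bounds; that is where the real work lies, see \cite[Thm.~2.7 and Cor.~3.4]{Volkov}.
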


\begin{remark}
In \cite[Thm. 1.2]{Keating}, a similar statement to \autoref{prop:spherical:norels} is shown in the context of Lagrangian spheres inside a symplectic manifold (giving rise to spherical objects in a derived Fukaya category).

In \cite[Thm. 1.2]{Anschuetz}, the statement was shown in the special case of a $\tilde A_1$-configurations of spherical objects in K3 categories.
\end{remark}

The following gives \autoref{main:relations} of the introduction.

\begin{theorem}
\label{thm:main}
Let $P_1,P_2$ be two $\IP^n[k]$-objects with $k\neq0,1$ in some algebraic triangulated category $\cS$ which are not isomorphic up to shift.
Let $\PP_1, \PP_2$ be the associated $\IP$-twists.
Suppose that there is a spherification functor $\FF \colon \cS \to \cT$ for $P_1$ and $P_2$.
If $\Hom^*(P_1,P_2)\neq 0$, then there are no relations between the $\IP$-twists, that is, $\genby{\PP_1,\PP_2} \cong F_2$.
\end{theorem}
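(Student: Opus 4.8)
The plan is to transport Volkov's freeness across the spherification functor $\FF$. Since $\Hom^*(P_1,P_2)\neq 0$ we have $\hom^*(P_1,P_2)\geq 1$, so \autoref{prop:pushforward} gives $\hom^*(\FF P_1,\FF P_2)\geq 2$. As $\FF$ is a spherification functor, $\FF P_1$ and $\FF P_2$ are $(nk+k-1)$-spherical objects of $\cB$, so \autoref{prop:spherical:norels} applies and the spherical twists $\TTT_{\FF P_1},\TTT_{\FF P_2}$ generate a free group $\genby{\TTT_{\FF P_1},\TTT_{\FF P_2}}\cong F_2$. It remains to pull this freeness back to $\PPP_1,\PPP_2$.

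For the pull-back I would use the intertwining. By \autoref{prop:commutativity} we have $\FF\PPP_i\cong\TTT_{\FF P_i}\FF$, and by \autoref{rem:inverse} also $\FF\PPinv_i\cong\TTinv_{\FF P_i}\FF$. Moving $\FF$ past one letter at a time, induction on the word length yields, for every word $w$ in two letters,
\[
\FF\circ w(\PPP_1,\PPP_2)\;\cong\;w(\TTT_{\FF P_1},\TTT_{\FF P_2})\circ\FF .
\]
Suppose now that a nontrivial reduced word produced a relation $w(\PPP_1,\PPP_2)\cong\id_\cA$. Substituting into the display gives
\[
w(\TTT_{\FF P_1},\TTT_{\FF P_2})\circ\FF\;\cong\;\FF ,
\]
so the autoequivalence $\Psi:=w(\TTT_{\FF P_1},\TTT_{\FF P_2})$ is isomorphic to the identity on the essential image of $\FF$. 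If this forces $\Psi\cong\id_\cB$, then the freeness from the first paragraph makes $w$ trivial, and $\genby{\PPP_1,\PPP_2}\cong F_2$ follows.

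The \emph{main obstacle} is precisely this cancellation of $\FF$. The functor is neither full nor faithful --- it collapses $\End^*(P_i)\cong\kk[t]/t^{n+1}$ onto the two-dimensional $\End^*(\FF P_i)$ --- so $\Psi\FF\cong\FF$ does not formally give $\Psi\cong\id_\cB$. I would first restrict to $\cB':=\genby{\FF P_1,\FF P_2}$: the defining triangles show the spherical twists preserve $\cB'$, and since each $\TTT_{\FF P_i}$ is the identity on the left orthogonal of $\FF P_i$, any relation is already a relation in $\cB'$, so Volkov's statement may be read there. The image of $\FF$ contains the generators $\FF P_1,\FF P_2$ and generates $\cB'$, and $\Psi$ is isomorphic to the identity on it; the crux is to upgrade this to a natural isomorphism $\Psi\cong\id_{\cB'}$, the obstruction being the non-functoriality of cones along morphisms of $\cB'$ that are not in the image of $\FF$. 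I expect this to be resolved either by bringing in the adjoint intertwining $\LL\,w(\TTT_{\FF P_1},\TTT_{\FF P_2})\cong w(\PPP_1,\PPP_2)\,\LL$ (obtained by passing to left adjoints, so that $w(\PPP_1,\PPP_2)\cong\id$ also gives $\LL\Psi\cong\LL$) and playing the two triangles $\LL\FF\to\id\to\CC'$ and $\TT'\to\id\to\FF\LL$ against each other, or, more robustly, by reducing to the faithfulness of the $F_2$-action on the single object $\FF P_1$: the ping-pong behind \autoref{prop:spherical:norels} shows that a nontrivial reduced word moves $\FF P_1$, whereas $\Psi\FF\cong\FF$ evaluated at $P_1$ gives $\Psi(\FF P_1)\cong\FF P_1$, a contradiction.
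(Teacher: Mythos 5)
Your setup is correct and coincides with the paper's: \autoref{prop:pushforward} gives $\hom^*(\FF P_1,\FF P_2)\geq 2$, Volkov's theorem (\autoref{prop:spherical:norels}) then gives $\genby{\TTT_1,\TTT_2}\cong F_2$ for $\TTT_i=\TTT_{\FF P_i}$, $S_i = \FF P_i$, and the word-level intertwining $\FF\circ w(\PPP_1,\PPP_2)\cong w(\TTT_1,\TTT_2)\circ\FF$, obtained by induction from \autoref{prop:commutativity} and \autoref{rem:inverse}, is exactly the paper's tool. However, the ``main obstacle'' you then fight with is a red herring: one never needs $\Psi\cong\id_\cB$, nor a natural isomorphism $\Psi\cong\id$ on $\genby{\FF P_1,\FF P_2}$, so your first proposed resolution is unnecessary (and, as you suspect, not feasible by those means). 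The genuine gap sits in your second, ``more robust'' resolution: the claim that ``the ping-pong behind \autoref{prop:spherical:norels} shows that a nontrivial reduced word moves $\FF P_1$'' is not something the ping-pong argument shows. Volkov's sets do not even contain $S_1$: since the $S_i$ are Calabi--Yau, $\hom^*(S_2,S_1)=\hom^*(S_1,S_2)$ and $\hom^*(S_1,S_1)=2$, so at $X=S_1$ the inequality defining $\cX$ becomes an equality and the one defining $\cX'$ fails outright (because $\hom^*(S_1,S_2)\geq 2$). What ping-pong yields is only that a nontrivial reduced word $w$, after conjugation by a suitable power of $\TTT_1$, maps the nonempty set $\cX'$ into the disjoint set $\cX$; hence $w$ moves \emph{some} element of the orbit $\cO_\TT$, but a priori not $S_1$ itself. (Your claim about $S_1$ happens to be true, but proving it needs more than ping-pong: from freeness and the conjugation formula $w\TTT_{S_1}w^{-1}\cong\TTT_{wS_1}$, any $w$ fixing $S_1$ lies in the centralizer of $\TTT_1$, which in a free group is $\genby{\TTT_1}$, and then $\TTT_1^m S_1\cong S_1[m(1-d)]$ with $d=nk+k-1>1$ rules out $m\neq 0$. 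None of this is in your write-up.)

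The repair is immediate with the tools you already have, and it essentially recovers the paper's proof. By your word-level intertwining, $\FF$ maps the orbit $\cO_\PP=\genby{\PPP_1,\PPP_2}\cdot\{P_1,P_2\}$ onto $\cO_\TT=\genby{\TTT_1,\TTT_2}\cdot\{S_1,S_2\}$: every element of $\cO_\TT$ is of the form $v(\TTT_1,\TTT_2)(S_i)\cong\FF\bigl(v(\PPP_1,\PPP_2)(P_i)\bigr)$. Hence a relation $w(\PPP_1,\PPP_2)\cong\id_{\cA}$ would force $\Psi=w(\TTT_1,\TTT_2)$ to fix \emph{every} isomorphism class in $\cO_\TT$, contradicting the fact that a nontrivial reduced word moves some element of $\cO_\TT$. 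The paper organises this slightly differently --- it pulls the sets $\cX,\cX'$ back to subsets $\cY,\cY'$ of $\cO_\PP$ and runs the ping-pong lemma directly on the $\IP$-twists acting in $\cA$ --- but the mechanism is identical, so once your final step is corrected as above, your argument and the paper's are the same proof in two phrasings.
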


\begin{proof}
The objects $S_1 = \FF P_1$ and $S_2 = \FF P_2$ are spherical by definition,
and by \autoref{prop:pushforward}, we have that $\hom^*(S_1,S_2) \geq 2$, and that $S_1,S_2$ are not isomorphic up to shift.
So we find that $\genby{\TTT_1,\TTT_2} \cong F_2$ by \autoref{prop:spherical:norels} where 
 $\TTT_1:=\TTT_{S_1}$ and $\TTT_2:=\TTT_{S_2}$ denote the associated spherical twists.

We recall the key idea of the proof of \autoref{prop:spherical:norels}; see the proof of \cite[Cor. 3.4]{Volkov}.
We denote by $\cO_\TT$ the union of the orbits of $S_1$ and $S_2$ under the action of $\genby{\TTT_1,\TTT_2}$ on $\cB$, that is,
\[
\cO_\TT = \genby{\TTT_1,\TTT_2} \cdot \{S_1,S_2\} \,.
\]
We define two subsets of $\cO_\TT$:
\[
\begin{split}
\cX  &= \{ X \in \cO_\TT \mid \hom^*(S_2,X) > \frac{\hom^*(S_1,S_2)}2 \cdot \hom^*(S_1,X) \} \,,\\
\cX' &= \{ X \in \cO_\TT \mid \hom^*(S_1,X) > \frac{\hom^*(S_1,S_2)}2 \cdot \hom^*(S_2,X) \} \,.
\end{split}
\]
As $\hom^*(S_1,S_2)\geq2$, the two sets do not intersect.

Now Y. Volkov shows that $\TTT_1 S_2 \in \cX$ (and analogously, $\TTT_2 S_1 \in \cX'$), so both sets are non-empty.
Finally, he shows that $\TT_{\!1}^m \cdot \cX' \subseteq \cX$ and $\TT_{\!2}^m \cdot \cX \subseteq \cX'$ for $m\neq0$.
Hence, one can conclude by the Ping-Pong Lemma, that $\TTT_1$ and $\TTT_2$ generate the free group $F_2$.

To obtain the statement about $\IP^n[k]$-objects, we bring \autoref{prop:commutativity} into play.
Define for $\cO_\PP = \genby{\PP_1,\PP_2} \cdot \{P_1,P_2\}$ the subsets
\[
\begin{split}
\cY  &= \{ Y \in \cO_\PP \mid \FF Y \in \cX \} \,, \\
\cY' &= \{ Y \in \cO_\PP \mid \FF Y \in \cX' \} \,.
\end{split}
\]
We note that both are non-empty, as $\PP_1 P_2 \in \cY$ and $\PP_2 P_1 \in \cY'$, using \autoref{prop:commutativity}. By the same proposition and \autoref{rem:inverse}, $\PP_1^m \cdot \cY'\subset \cY$ and $\PP_2^m \cdot \cY\subset \cY'$ for $m\neq 0$. Hence, we can now conclude that the Ping-Pong Lemma applies here, so
$\PP_1$ and $\PP_2$ generate the free group $F_2$.
\end{proof}

\begin{remark}\label{rem:n=1}
 As mentioned in the introduction, our assumption that $n\ge 2$ is not logically necessary for \autoref{thm:main}. Indeed, for $n=1$, there are never relations between $\PPP_{P_1}$ and $\PPP_{P_2}$ if $\Hom^*(P_1,P_2)\neq 0$, independently of the existence of a spherification functor. The reason is that by \cite{Volkov} the spherical twists $\TTT_{P_1}$ and $\TTT_{P_2}$ either generate the free group $F_2$ or the braid group $A_2$. In either case, its squares $\PPP_{P_i}=\TTT_{P_i}^{\ 2}$ do not satisfy any relations.  
\end{remark}

\section{Twists along orthogonal $\IP$-objects}
\label{sec:orthogonal}

\begin{proposition}
 Let $P_1, P_2$ be two $\IP^n[k]$-objects in some algebraic triangulated category with $\Hom^*(P_1,P_2)=0$. Then the following holds:
\begin{itemize}
\item The associated $\IP$-twists $\PP_1$ and $\PP_2$ commute.
\item If $(n,k)\neq (1,1)$, the two $\IP$-twists span a free abelian group: 
\[
\langle \PP_1,\PP_2\rangle\cong \IZ^2\,.
\]
\end{itemize}
\end{proposition}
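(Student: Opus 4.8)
The plan is to treat the two bullets separately, after first recording that orthogonality is automatically two-sided. Since each $P_i$ is $nk$-Calabi--Yau, we have $\Hom^*(P_2,P_1)\cong\Hom^*(P_1,P_2[nk])^\vee$, so $\Hom^*(P_1,P_2)=0$ forces $\Hom^*(P_2,P_1)=0$ as well. The key consequence is that $\Hom^*(P_2,X)=0$ for every $X\in\genby{P_1}$ (the full subcategory of such $X$ is thick and contains $P_1$), and then the defining triangle of $\PPP_{P_2}$ shows $\Cone_H(X)=0$, whence $\PP_2 X\cong X$ functorially. Thus $\PP_2$ restricts to the identity on $\genby{P_1}$, and symmetrically $\PP_1$ restricts to the identity on $\genby{P_2}$; note in particular that each $\PP_i$ preserves both $\genby{P_1}$ and $\genby{P_2}$.

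For commutativity I would invoke the conjugation-equivariance of $\IP$-twists: for any autoequivalence $\Phi$ and any $\IP^n[k]$-object $P$ one has $\Phi\,\PPP_P\,\Phi^{-1}\cong\PPP_{\Phi P}$, which holds because an exact equivalence carries the functorial triangle defining $\PPP_P$ onto the one defining $\PPP_{\Phi P}$, using the natural identification $\Hom^*(P,\blank)\cong\Hom^*(\Phi P,\Phi\blank)$. Applying this with $\Phi=\PP_2$ and using $\PP_2 P_1\cong P_1$ from the first paragraph yields $\PP_2\,\PP_1\,\PP_2^{-1}\cong\PPP_{\PP_2 P_1}\cong\PPP_{P_1}=\PP_1$, i.e. $\PP_1\PP_2\cong\PP_2\PP_1$.

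For the second bullet, commutativity makes $(a,b)\mapsto\PP_1^a\PP_2^b$ a surjection $\IZ^2\to\langle\PP_1,\PP_2\rangle$, so it remains to prove injectivity. If $\PP_1^a\PP_2^b\cong\id$, restricting to $\genby{P_1}$ kills $\PP_2$ and leaves $\PP_1^a|_{\genby{P_1}}\cong\id$, in particular $\PPP_{P_1}^a P_1\cong P_1$; restricting instead to $\genby{P_2}$ gives $\PPP_{P_2}^b P_2\cong P_2$. Everything thus reduces to the single claim that the $\IP$-twist of a $\IP^n[k]$-object has infinite order on the subcategory it generates, i.e. $\PPP_P^a P\cong P$ forces $a=0$ whenever $(n,k)\neq(1,1)$.

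The main work, and the step I expect to be the genuine obstacle, is the explicit computation of $\PPP_P(P)$ from the defining triangle. Writing $V=\End^*(P)=\bigoplus_{j=0}^n\kk\,t^j$, the map $H=t\otimes\id-\id\otimes t\colon V\otimes P[-k]\to V\otimes P$ is triangular with identity entries off the diagonal and multiplication by $t$ on the diagonal; Gaussian elimination of the identity entries collapses $\Cone_H(P)$ to $P\oplus P[1-(n+1)k]$, the induced connecting map being $t^{n+1}=0$. A second elimination against $\mathrm{ev}$ then gives $\PPP_P(P)\cong P[2-(n+1)k]$. The delicate point is $k=1$: there the summand $P[1-(n+1)k]$ lands in degree $nk$ where $\End^*(P)$ is nonzero, so a priori a correction term $c\,t^n$ could enter $\mathrm{ev}$, and one must check it is annihilated in the elimination and leaves the shift unchanged (it does, so the formula $P[2-(n+1)k]$ holds uniformly). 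Granting this, $\PPP_P^a P\cong P[a(2-(n+1)k)]$, and since $P\cong P[j]$ only for $j=0$ (because $t$ is nilpotent, no nonzero $t^\ell$ is invertible), this is isomorphic to $P$ iff $a(2-(n+1)k)=0$. As $(n+1)k=2$ forces $(n,k)=(1,1)$, the excluded case is exactly where the shift vanishes; otherwise $a=0$, and combined with the reduction above this yields $\langle\PP_1,\PP_2\rangle\cong\IZ^2$.
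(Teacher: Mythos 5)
Your proposal is correct and takes essentially the same approach as the paper: commutativity comes from the conjugation identity $\PP_2\,\PP_1\,\PP_2^{-1}\cong\PPP_{\PP_2 P_1}\cong\PPP_{P_1}$ (using $\PP_2 P_1\cong P_1$, which indeed needs the Calabi--Yau property to make orthogonality two-sided), and $\genby{\PP_1,\PP_2}\cong\IZ^2$ follows by evaluating $\PP_1^{a}\PP_2^{b}$ at $P_1$ and $P_2$ via the formula $\PPP_{P_i}(P_i)\cong P_i[2-(n+1)k]$, whose shift vanishes exactly when $(n,k)=(1,1)$. The only difference is that the paper imports these ingredients from \cite[Prop.~2.1, Lem.~2.4(ii)]{Krug-HK}, whereas you verify them by hand (correctly, including the $k=1$ correction-term subtlety in the Gaussian elimination), which makes your argument self-contained.
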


\begin{proof}
This is proved in \cite[Cor.\ 2.5]{Krug-HK}, but there it is formulated only for $\IP^n$-objects in (equivariant) derived categories of smooth projective varieties. The general proof is exactly the same. However, we reproduce it here, as it is not long, and we want to make clear where the assumption $(n,k)\neq (1,1)$ is needed.

By \cite[Prop.\ 2.1]{Krug-HK}, which summarises some results from \cite{Add}, we have
\begin{align}
&\PP_i(P_j)=P_j \quad\text{for $i\neq j$,} \label{eq:P1}\\
 &\PP_i(P_i)=P_i[-(n+1)k+2]\,. \label{eq:P2}
\end{align}
By \eqref{eq:P1} together with \cite[Lem.\ 2.4(ii)]{Krug-HK}, we have
\[
 \PP_1=\PPP_{P_1}=\PP_{\PP_2(P_1)}=\PP_2\PP_1\PP_2^{-1}
\]
which proves the first part of the assertion.

For the second part, note that, due to the commutativity, we can write every $g\in \langle \PP_1,\PP_2\rangle$ in the form $g=\PP_1^{\nu_1}\PP_2^{\nu_2}$. By \eqref{eq:P1} and \eqref{eq:P2}, we have
\[
g(P_i)=P_i\bigl[\nu_i(-(n+1)k+2)\bigr]\quad\text{for $i=1,2$.}
\]
In particular, only for $\nu_1=0=\nu_2$ it happens that $g=\id$, as for $(n,k)\neq(1,1)$ we find that $-(n+1)k+2 \neq 0$.
\end{proof}

\begin{remark}\label{rem:11}
The assumption $(n,k)\neq (1,1)$ is not only needed for the proof, but actually there are counterexamples to the statement for $(n,k)=(1,1)$. In other words, in the case when we are speaking about $1$-spherical objects.

The following can be found in \cite[\S 3.4]{ST} and \cite[Ex.\ 8.25]{Huy} as part of an example of a $A_3$-sequence of $1$-spherical objects where the associated braid group action is not faithful. Let $C$ be an elliptic curve, and let $x,y\in C$ be two points such that $\reg_C(x-y)$ is a $2$-torsion line bundle. Let $P_1=\reg_x$ and $P_2=\reg_y$ be the associated skyscraper sheaves. They are orthogonal $1$-spherical objects with associated spherical twists
\[
 \TTT_1=\reg_C(x)\otimes\blank\quad\text{and}\quad \TTT_2= \reg_C(y)\otimes \blank\,.
\]
As the $\IP$-twists are the squares of the spherical twists, $\PP_i=\TT_{\!i}^2$, we get the relation
\[
\PP_1\PP_2^{-1}=\reg_C(2x-2y)\otimes\blank=\reg_C\otimes\blank=\id
\]
which means that the two $\IP$-twists are equal in this case.
\end{remark}

\section{Open questions and speculation}

Let us finish by asking some open questions related to our results. Recall that we proved the absence of relations between $\IP$-twists associated to non-orthogonal pairs of $\IP$-objects under the assumption that a spherification functor for both objects exists, and provided the existence of spherification functors for many, but not all, pairs of $\IP$-objects.

\begin{question}\label{Q1}
Given a pair $P_1,P_2$ of $\IP^n[k]$-objects in some algebraic triangulated category $\cS$, is there always a spherification functor $\FF\colon \langle P_1,P_2\rangle\to \cT$ for $P_1$ and $P_2$? 
More generally, one may ask the same question for a collection $P_1,\ldots,P_m$ of $\IP^n[k]$-objects.
\end{question}

It seems to us, that without the assumption of \autoref{prop:formal:spherification} that there is a central element of $A=\End^\bullet(P_1\oplus P_2)$ lifting $t_1+t_2\in \End^*(P_1\oplus P_2)$, then one has to come up with a construction really different from the one used there 
(\emph{provided} it is even true that there is a spherification functor in this case). 
To explain the problem, let us rephrase the idea of our construction of a spherification functor $\FF$ of \autoref{prop:formal:spherification}. We want to have a functor $\FF$ whose associated comonad $\LL \FF$ is the cone of the morphism of functors $h\colon [-k]\to \id$ in $\cD(A)$. However, if $h$ is not central, there is not even a morphism of functors $h\colon [-k]\to \id$ as, for a general dg-module $M$ over $A$, multiplication by $h$ will not be an $A$-linear map $M[-k]\to M$.      

\begin{question}\label{Q2}
Given a pair $P_1,P_2$ of non-orthogonal $\IP^n[k]$-objects in some algebraic triangulated category $\cS$ which are not isomorphic up to shift, is it always true that there are no relations between the associated twists? 
\end{question}

By \autoref{thm:main}, a positive answer to \autoref{Q1} would imply a positive answer to \autoref{Q2}. 
If  the answer to \autoref{Q1} should be negative, one could still try to prove a positive answer to \autoref{Q2} by hand. This seems feasible, but much more work than just pulling-back Volkov's result about the absence of relations between spherical twists (or rather its proof via the ping-pong lemma) along a spherification functor. 

On the other hand, the authors would not be completely surprised if there should be examples of $\IP^n[k]$-objects $P_1,P_2$ where the associated $\IP$-twists $\PP_1, \PP_2$ satisfy some relation when $t_1+t_2$ is not a central element of $\End^*(P_1\oplus P_2)$, in which case the answer to \autoref{Q2} would be negative. The reason is that the action of $t_1$ and $t_2$ on $\Hom^*(P_1,P_2)$ and $\Hom^*(P_2,P_1)$ comes into play when we compute $\PP_1 P_2$ and $\PP_2 P_1$; see \autoref{prop:HTtwist}. 

Another natural question is about relations (or the lack thereof) between more than just two $\IP$-twists. However, there the answer is not even known for spherical twists, so it would be natural to first try to extend Volkov's results to more than just two spherical twists.

A last question is if and how our results generalise from $\IP$-objects to $\IP$-functors. Again, it seems like a sensible approach to first try to generalise Volkov's results from spherical objects to spherical functors.

\bibliographystyle{alpha}
\bibliography{biblio}

\begin{thebibliography}{BvdB03}

\bibitem[Add16]{Add}
Nicolas Addington.
\newblock New derived symmetries of some hyperk\"ahler varieties.
\newblock {\em Algebr. Geom.}, 3(2):223--260, 2016.
\newblock \arXiv{1112.0487}.

\bibitem[AL17]{ALdg}
Rina Anno and Timothy Logvinenko.
\newblock Spherical {DG}-functors.
\newblock {\em J. Eur. Math. Soc. (JEMS)}, 19(9):2577--2656, 2017.
\newblock \arXiv{1309.5035}.

\bibitem[AL19]{AL19}
Rina Anno and Timothy Logvinenko.
\newblock $\mathbb{P}^n$-functors.
\newblock {\em \rm{\arXiv{1905.05740}}}, 2019.

\bibitem[AL22]{ALunique}
Rina Anno and Timothy Logvinenko.
\newblock On uniqueness of {{\(\mathbb{P}\)}}-twists.
\newblock {\em Int. Math. Res. Not.}, 2022(14):10533--10554, 2022.
\newblock \arXiv{1711.06649}.

\bibitem[Ans13]{Anschuetz}
Johannes Ansch\"utz.
\newblock Autoequivalences of derived categories of local {K}3 surfaces, 2013.
\newblock \href{https://www.math.uni-bonn.de/people/ja/masterthesis.pdf}{Master
  thesis}, Rheinische Friedrich-Wilhelms-Universit\"at Bonn, 2013.

\bibitem[BPP17]{BPP}
Nathan Broomhead, David Pauksztello, and David Ploog.
\newblock Discrete derived categories. {I}: {Homomorphisms}, autoequivalences
  and t-structures.
\newblock {\em Math. Z.}, 285(1-2):39--89, 2017.
\newblock \arXiv{1312.5203}.

\bibitem[BvdB03]{Bon-vdB}
A.~Bondal and M.~van~den Bergh.
\newblock Generators and representability of functors in commutative and
  noncommutative geometry.
\newblock {\em Mosc. Math. J.}, 3(1):1--36, 258, 2003.
\newblock \arXiv{math/0204218}.

\bibitem[CS17]{Coelho}
Raquel Coelho~Sim{\~o}es.
\newblock Mutations of simple-minded systems in {Calabi}-{Yau} categories
  generated by a spherical object.
\newblock {\em Forum Math.}, 29(5):1065--1081, 2017.
\newblock \arXiv{1512.09321}.

\bibitem[Efi07]{Efimov}
Alexander Efimov.
\newblock Braid group actions on triangulated categories, 2007.
\newblock \href{http://www.moebiuscontest.ru/files/2007/efimov.pdf}{M\"obius
  contest paper}.

\bibitem[GK14]{GKsym}
Nora Ganter and Mikhail Kapranov.
\newblock Symmetric and exterior powers of categories.
\newblock {\em Transform. Groups}, 19(1):57--103, 2014.
\newblock \arXiv{1110.4753}.

\bibitem[HK19]{Formality}
Andreas Hochenegger and Andreas Krug.
\newblock Formality of $\mathbb{P}$-objects.
\newblock {\em Compos. Math.}, 155(5):973--994, 2019.
\newblock \arXiv{1709.06434}.

\bibitem[HKP16]{HKP}
Andreas Hochenegger, Martin Kalck, and David Ploog.
\newblock Spherical subcategories in algebraic geometry.
\newblock {\em Math. Nachr.}, 289(11-12):1450--1465, 2016.
\newblock \arXiv{1208.4046}.

\bibitem[HM20]{HM}
Andreas Hochenegger and Ciaran Meachan.
\newblock Frobenius and spherical codomains and neighbourhoods.
\newblock {\em Doc. Math.}, 25:483--525, 2020.
\newblock \arXiv{2001.04774}.

\bibitem[HT06]{HT06}
Daniel Huybrechts and Richard Thomas.
\newblock {$\Bbb P$}-objects and autoequivalences of derived categories.
\newblock {\em Math. Res. Lett.}, 13(1):87--98, 2006.
\newblock \arXiv{math/0507040}.

\bibitem[HT10]{HT10}
Daniel Huybrechts and Richard Thomas.
\newblock Deformation-obstruction theory for complexes via {Atiyah} and
  {Kodaira}-{Spencer} classes.
\newblock {\em Math. Ann.}, 346(3):545--569, 2010.
\newblock \arXiv{0805.3527}.

\bibitem[Huy99]{Huybrechts-habil}
Daniel Huybrechts.
\newblock Compact hyperk{\"a}hler manifolds: {Basic} results.
\newblock {\em Invent. Math.}, 135(1):63--113, 1999.
\newblock \arXiv{alg-geom/9705025}.

\bibitem[Huy06]{Huy}
Daniel Huybrechts.
\newblock {\em Fourier-{M}ukai transforms in algebraic geometry}.
\newblock Oxford Mathematical Monographs. The Clarendon Press, 2006.

\bibitem[Kea14]{Keating}
Ailsa Keating.
\newblock Dehn twists and free subgroups of symplectic mapping class groups.
\newblock {\em J. Topol.}, 7(2):436--474, 2014.
\newblock \arXiv{1204.2851}.

\bibitem[Kon95]{Kontsevich}
Maxim Kontsevich.
\newblock Homological algebra of mirror symmetry.
\newblock In {\em Proceedings of the international congress of mathematicians,
  ICM '94, August 3-11, 1994, Z\"urich, Switzerland. Vol. I}, pages 120--139.
  Birkh{\"a}user, 1995.
\newblock \arXiv{alg-geom/9411018}.

\bibitem[Kru15]{Krug-HK}
Andreas Krug.
\newblock On derived autoequivalences of {Hilbert} schemes and generalized
  {Kummer} varieties.
\newblock {\em Int. Math. Res. Not.}, 2015(20):10680--10701, 2015.
\newblock \arXiv{1301.4970}.

\bibitem[Kru18]{Kunits}
Andreas Krug.
\newblock Varieties with {{\(\mathbb {P}\)}}-units.
\newblock {\em Trans. Am. Math. Soc.}, 370(11):7959--7983, 2018.
\newblock \arXiv{1604.03537}.

\bibitem[MR19]{MR19}
Ciaran Meachan and Theo Raedschelders.
\newblock Hochschild cohomology and deformations of $\mathbb{P}$-functors.
\newblock {\em \rm{\arXiv{1909.07758}}}, 2019.

\bibitem[MW18]{MW18}
Cheuk~Yu Mak and Weiwei Wu.
\newblock Dehn twist exact sequences through {Lagrangian} cobordism.
\newblock {\em Compos. Math.}, 154(12):2485--2533, 2018.
\newblock \arXiv{1509.08028}.

\bibitem[PS14]{PS}
David Ploog and Pawel Sosna.
\newblock On autoequivalences of some {Calabi}-{Yau} and hyperk{\"a}hler
  varieties.
\newblock {\em Int. Math. Res. Not.}, 2014(22):6094--6110, 2014.
\newblock \arXiv{1212.4604}.

\bibitem[ST01]{ST}
Paul Seidel and Richard Thomas.
\newblock Braid group actions on derived categories of coherent sheaves.
\newblock {\em Duke Math. J.}, 108(1):37--108, 2001.
\newblock \arXiv{math/0001043}.

\bibitem[Tor24]{Torricelli}
Brunella~Charlotte Torricelli.
\newblock Projective twists and the {H}opf correspondence.
\newblock {\em Algebr. Geom. Topol.}, 24(8):4139--4200, 2024.
\newblock \arXiv{2006.12170}.

\bibitem[Vol23]{Volkov}
Yury Volkov.
\newblock Groups generated by two twists along spherical sequences.
\newblock {\em Math. Proc. Camb. Philos. Soc.}, 174(1):137--161, 2023.
\newblock \arXiv{1901.10904}.

\end{thebibliography}

\end{document}